\documentclass[english,final]{IEEEtran}
\usepackage[T1]{fontenc}
\usepackage{mathrsfs}
\usepackage{bm}
\usepackage{amsmath}
\usepackage{amsthm}
\usepackage{amssymb}
\usepackage{stmaryrd}
\usepackage{graphicx}
 \PassOptionsToPackage{normalem}{ulem}
 \usepackage{ulem}

\makeatletter



  
\theoremstyle{plain}
\newtheorem{theorem}{\protect\theoremname}
  \theoremstyle{plain}
  
  \theoremstyle{plain}
   \newtheorem{lemma}{\protect\lemmaname}
  \theoremstyle{remark}
  
   \newtheorem{assumption}{\protect\assumptionname}
\theoremstyle{assumption}

\theoremstyle{algorithm}  
  
%
%

\usepackage{amsmath}
\usepackage{amssymb}
\usepackage{graphicx,psfrag,cite,subfigure}
\usepackage[table]{xcolor}

\interdisplaylinepenalty=2500

\markboth{Preprint version}{}

\usepackage[acronym]{glossaries}
\newcommand{\newac}{\newacronym}

\makeglossaries

\newac{speb}{SPEB}{square position error bound}
\newac[plural=EFIMs,firstplural=Fisher information matrices (EFIMs)]{efim}{EFIM}{Fisher information matrix}
\newac{ne}{NE}{Nash equilibrium}
\newac{mse}{MSE}{mean squared error}
\newac{toa}{TOA}{time-of-arrival}
\newac{snr}{SNR}{signal-to-noise ratio}
\newac{lan}{LAN}{local area network}
\newac{psd}{PSD}{positive semidefinite}
\newac{pd}{PD}{positive definite}
\newac{wrt}{w.r.t.}{with respect to}
\newac{lhs}{L.H.S.}{left hand side}
\newac{wp1}{w.p.1}{with probability 1}
\newac{kkt}{KKT}{Karush-Kuhn-Tucker}
\newac{wlog}{w.l.o.g.}{without loss of generality}
\newac{mle}{MLE}{maximum likelihood estimation}
\newac{gps}{GPS}{global positioning system}
\newac{rssi}{RSSI}{received signal strength}
\newac{mimo}{MIMO}{multiple-input multiple-output}
\newac{csi}{CSI}{channel state information}
\newac{fdd}{FDD}{frequency division duplexing}
\newac{ms}{MS}{mobile station}
\newac{bs}{BS}{base station}
\newac{d2d}{D2D}{device-to-device}
\newac{slnr}{SLNR}{signal-to-interference-leakage-and-noise-ratio}
\newac{ula}{ULA}{uniform linear antenna array}
\newac{pas}{PAS}{power angular spectrum}
\newac{mmse}{MMSE}{minimum mean square error}
\newac{zf}{ZF}{zero-forcing}
\newac{rzf}{RZF}{regularized zero-forcing}
\newac{as}{AS}{angular spread}
\newac{aod}{AOD}{angle of departure}
\newac{iid}{i.i.d.}{independent and identically distributed} 
\newac{sinr}{SINR}{signal-to-interference-and-noise ratio}
\newac{tdd}{TDD}{time-division duplex}
\newac{rvq}{RVQ}{random vector quantization}
\newac{rhs}{R.H.S.}{right hand side}
\newac{mrc}{MRC}{maximum ratio combining}
\newac{cdf}{CDF}{cumulative distribution function}
\newac{a.s.}{a.s.}{almost surely}
\newac{los}{LOS}{line-of-sight}
\newac{jsdm}{JSDM}{joint spatial division and multiplexing}
\newac{map}{MAP}{maximum a posteriori}
\newac{klt}{KLT}{Karhunen-Lo\`eve Transform}
\newac{lbe}{LBE}{link bargaining equilibrium}
\newac{se}{SE}{Stackelberg equilibrium}
\newac{uav}{UAV}{unmanned aerial vehicle}
\newac{nlos}{NLOS}{non-line-of-sight}
\newac{pdf}{PDF}{probability density function}
\newac{em}{EM}{expectation-maximization}
\newac{knn}{KNN}{$k$-nearest neighbor}
\newac{svd}{SVD}{singular value decomposition}
\newac{nmf}{NMF}{non-negative matrix factorization}
\newac{umf}{UMF}{unimodal-constrained matrix factorization}
\newac{rmse}{RMSE}{rooted mean squared error}

\setkeys{Gin}{width=1.0\columnwidth}

\makeatother

\usepackage{babel}
  \providecommand{\definitionname}{Definition}
  \providecommand{\lemmaname}{Lemma}
  \providecommand{\propositionname}{Proposition}
  \providecommand{\remarkname}{Remark}
\providecommand{\theoremname}{Theorem}
\providecommand{\assumptionname}{Assumption}
\providecommand{\algorithmname}{Algorithm}

\begin{document}

 \title{Dynamic Sensor Subset Selection for Centralized Tracking of a   Stochastic Process
 \thanks{The authors are with the Department of Electrical Engineering, University of 
 Southern California. Email: \{achattop,ubli\}@usc.edu}\\
 \thanks{This work was funded by the following grants: ONR N00014-15-1-2550, 
NSF CNS-1213128, 
NSF CCF-1718560, 
NSF CCF-1410009, 
NSF CPS-1446901, 
AFOSR FA9550-12-1-0215
}
\thanks{Some parts of this paper have previously been accepted in conferences \cite{arpan2017globecom}, \cite{arpan2018isit}.}
}

\author{
Arpan~Chattopadhyay \& Urbashi~Mitra \vspace*{-0.4in}
}

\maketitle
%
%



\ifdefined\SINGLECOLUMN
	\setkeys{Gin}{width=0.5\columnwidth}
	\newcommand{\figfontsize}{\footnotesize} 
\else
	\setkeys{Gin}{width=1.0\columnwidth}
	\newcommand{\figfontsize}{\normalsize} 
\fi

\begin{abstract}
Motivated by the Internet-of-things and sensor networks for cyberphysical systems, the problem of dynamic sensor activation for the centralized tracking of an i.i.d. time-varying process is examined. The tradeoff is between energy efficiency, which decreases with the number of active sensors, and fidelity, which increases with the number of active sensors.  The problem of minimizing the time-averaged mean-squared error over infinite horizon is examined under the constraint of  the mean number of active sensors.  The proposed methods artfully combine  Gibbs sampling and stochastic approximation for learning, in order to create a high performance, energy efficient tracking mechanisms with active sensor selection.  Centralized tracking of  i.i.d. process with known distribution as well as an unknown parametric distribution are considered. For an i.i.d. process with known distribution, convergence to the global optimal solution with high probability is proved. The main challenge of the i.i.d. case is that the process has a distribution parameterized by a known or  unknown parameter which must be learned; one  key theoretical result  proves that the proposed algorithm for tracking an i.i.d. process with unknown parametric distribution converges to local optima. Numerical results show the efficacy of the proposed algorithms and also  suggest that global optimality is in fact achieved in some cases.  
\end{abstract}

\section{Introduction}\label{section:introduction}
Controlling and monitoring physical processes via sensed data are integral parts of internet-of-things (IOT) and cyber-physical systems,   and also have applications  in  industrial process monitoring and control, localization, tracking of mobile objects,  environmental monitoring, system identification and disaster management.  In such applications, sensors are simultaneously resource constrained (power and/or bandwdith) and tasked to achieve high performance sensing, control, communication, and tracking.  Wireless sensor networks must further contend with interference and fading.  One strategy for balancing resource use with performance is to activate a subset of the total possible number of sensors to limit both computation as well as bandwidth use.
  
Herein, we address the fundamental problem of optimal dynamic sensor subset selection for tracking a time-varying stochastic process.   We first examine the centralized tracking of an i.i.d. process with a known distribution, which is a   precursor to the  centralized tracking of an i.i.d. process with an unknown, parametric distribution.   For the known prior distribution case, optimality of the proposed algorithm is proven. For the  proposed algorithm for centralized  tracking of an i.i.d. process with parameter learning, results on almost sure convergence to local optima are proven.    The algorithms are numerically validated to demonstrate their efficacy against competetive algorithms and natural heuristics.

%
Optimal sensor subset selection problems can be broadly classified into two 
categories: (i) optimal sensor subset selection for static data with known
prior distribution, but unknown realization, and (ii)  dynamic sensor subset
selection  to track a time-varying stochastic process. There have been several
recent attempts to solve the first problem; see \cite{wang-etal16efficient-observation-selection} for sensor network applications and \cite{schnitzler-etal15sensor-selection-crowdsensing} for mobile crowdsensing applications. This problem poses two major challenges: (i) computing the estimation
error given the observations from a subset of sensors, and (ii) finding the
optimal sensor subset from exponentially many number of subsets.  In \cite{wang-etal16efficient-observation-selection}, a tractable lower bound on performance addressed the first challenge and a greedy algorithm addressed the second. In our current  paper, we  use Gibbs sampling to solve the problem of tracking an i.i.d. time varying process via active sensing. While estimation of static data and tracking i.i.d. time-varying process are the same problems mathematically, herein we provide a provably optimal alternative approach to that of \cite{wang-etal16efficient-observation-selection}; in case the distribution is unknown and learnt over time, Gibbs sampling also yields a low-complexity sensor subset selection scheme, thereby eliminating the need for running a greedy algorithm whose complexity scales with the number of sensors.

There have   been several related works on the problem of dynamic sensor subset selection  to track a time-varying stochastic process; see \cite{daphney-etal14active-classification-pomdp, daphney-etal13energy-efficient-sensor-selection, krishnamurthy07structured-threshold-policies, wu-arapostathis08optimal-sensor-querying, gupta-etal06stochastic-sensor-selection-algorithm, bertrand-moonen10sensor-selection-linear-mmse}. In  \cite{krishnamurthy07structured-threshold-policies}, the problem of selecting a single sensor node  to track a Markov chain is addressed; \cite{krishnamurthy07structured-threshold-policies} assumes the availability of a centralized controller which has knowledge of the latest observation made by the selected sensor. This problem was extended to sensor subset selection (by a centralized controller)  in \cite{daphney-etal14active-classification-pomdp}, and energy efficiency issues were incorporated in \cite{daphney-etal13energy-efficient-sensor-selection}. These two papers considered sequential  decision making over a finite time horizon. The  existence of an optimal policy for the centralized optimal dynamic sensor subset selection problem for infinite time horizon is proved in \cite{wu-arapostathis08optimal-sensor-querying};   the structure of the optimal policy for the special case of linear quadratic Gaussian (LQG) problem is also provided. The paper \cite{gupta-etal06stochastic-sensor-selection-algorithm} addresses the problem of selecting a single sensor at each time, with the assumption that the observation of the sensor is shared among all   sensors.  Thompson sampling, in  \cite{schnitzler-etal15sensor-selection-crowdsensing},  solved the problem of {\em centralized} tracking of a linear Gaussian process (with unknown noise statistics) via active sensing.

Herein, we   consider the problem of dynamically choosing the optimal sensor subset for centralized   tracking of an i.i.d. time-varying process with an unknown parametric distribution, using tools from   Gibbs sampling (see \cite{breamud99gibbs-sampling}) and stochastic approximation (see \cite{borkar08stochastic-approximation-book}). To the best of our knowledge, this problem has not been  solved in   prior work.    Our work accommodates energy constraint in the network by imposing a constraint on the number of active sensors.

In this paper, we make the following contributions:

\begin{enumerate}
\item    In Section~\ref{section:gibbs-sampling-unconstrained-problem}, a centralized tracking and learning algorithm for an i.i.d. process   with a known  distribution is developed, in order to minimize time-average estimation error subject to a constraint on the mean number of active sensors.  In particular, Gibbs sampling minimizes computational complexity for a relaxed version of the problem, along  with stochastic approximation that is employed to iteratively update a Lagrange multiplier to achieve the mean number of activated sensors constraint.  Desired almost sure convergence to the optimal solution is proved. A challenge we overcome in the analysis,  is handling updates at different time scales that given rise to several technical issues that need to be addressed.

\item    In Section~\ref{section:iid-data}, a centralized tracking and learning algorithm for an i.i.d. process   with an unknown, but parametric distribution is developed. In addition to Gibbs sampling and stochastic approximation as used in Section~\ref{section:gibbs-sampling-unconstrained-problem}, simultaneous perturbation stochastic approximation (SPSA) is employed for parameter estimation obviating the need for   expectation-maximization.

\item  Numerical results show that the proposed algorithms outperform simple greedy algorithms.  Numerical results also demonstrate a tradeoff between performance and computational cost for learning.  Furthermore, the numerical results show that sometimes global optima are achieved in tracking i.i.d. process with unknown parametric distribution.
\end{enumerate}

The rest of the paper is organized as follows. The system model  is described in Section~\ref{section:system-model}. Tracking of an i.i.d. process with known distribution is described in Section~\ref{section:gibbs-sampling-unconstrained-problem}.  Section~\ref{section:iid-data} deals with the tracking of an i.i.d. process with unknown,  parametric distribution.   Numerical results are presented in Section~\ref{section:numerical-results}, followed by the conclusion in Section~\ref{section:conclusion}. All mathematical proofs are provided in the appendices.

\section{System Model}\label{section:system-model}
\begin{figure}[!t]
\begin{center}
\includegraphics[height=5cm, width=7cm]{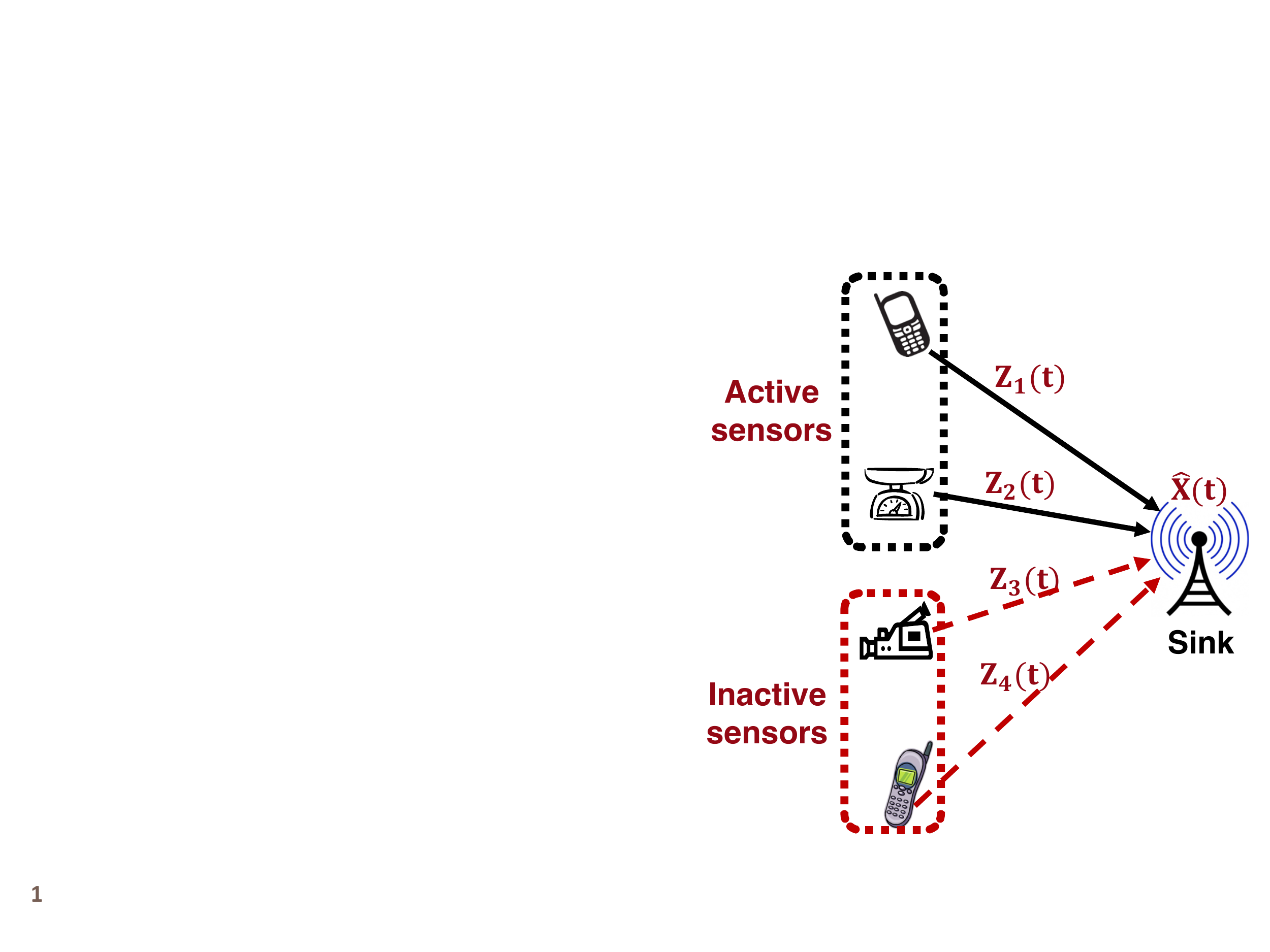}
\end{center}
\caption{Centralized   estimation. Sensors send their observations $\{Z_1(t), \cdots,Z_N(t)\}$ to the   sink, and the fusion center estimates $\hat{X}(t)$. }
\label{fig:network-diagrams}
\vspace{-2mm}
\end{figure}

We consider a  connected single-hop wireless sensor network (see Figure~\ref{fig:network-diagrams}) where sensor nodes communicate directly with the fusion center; the fusion center is responsible for all control or estimation operations in the network.   The sensor nodes are denoted by the index set   $\mathcal{N}=\{1,2,\cdots,N\}$.  While our methods can be adapted to consider multihopped communication via relays, we do not treat this case herein.

The physical process under measurement is denoted by  $\{X(t)\}_{t \geq 0}$, where $t$ is a discrete time index and $X(t) \in \mathbb{R}^{q \times 1}$.    $\{X(t)\}_{t \geq 0}$ is an i.i.d. process. The distribution of $X(t)$ may be known, or $X(t)$ might have a parametric distribution $p_{\theta_0}(\cdot)$, where the {\em unknown} parameter vector $\theta_0$  needs to be be learnt via the  measurements. The parameter vector $\theta_0$ lies inside the interior of a compact subset $\Theta \subset \mathbb{R}^d$.

 At time $t$, if a sensor~$k$ is used to sense the process, then the observation at sensor~$k$ is provided by a $r$-dimensional column vector  
 \begin{eqnarray*}
 z_k(t)& = & f_0 (X(t))+v_k(t),
 \end{eqnarray*}
 where   $v_k(t)$ is a Gaussian random vector (observation noise) which is   independent across $k$ and i.i.d. across $t$. 
 
 Let $B(t) \in \{0,1\}^{1 \times N}:=\mathcal{B}$ be a vector where the $k$-th entry $B_k(t)=1$  if the $k$th sensor is activated at time, $t$ and $B_k(t)=0$, if it is inactive. The decision to activate any sensor for sensing and communicating the observation is taken   by the fusion center. We denote by $\mathcal{B} \doteq\{0,1\}^N$ the set of all possible configurations ({\em i.e.}, sensor activation vectors) in the network, and by $B$ a generic configuration. Clearly, $B(t) \in \mathcal{B}$. Each configuration represents a unique set of activated sensors.  The notation $B_{-j} \in \{0,1\}^{N-1}$ is used to represent the configuration $B$ with its $j$-th entry removed. We denote by $(B_{-j},0)$ another configuration which agrees with $B$ at all coordinates other than the $j$-th coordinate, where $(B_{-j},0)$ has a value $0$ as the $j$-th entry (i.e., the $j$-th sensor is not activated); a similar definition holds for $(B_{-j},1)$.

The observation made by sensor~$k$ at time $t$ is  $Z_k(t)=B_k(t) z_k(t)$. We define $Z(t) \doteq \{Z_k(t): 1 \leq k \leq N\}\}$.

\subsection{Problem framework}
Our sensor network seeks to achieve two goals: develop a sensing strategy, $B(t)$ and compute an estimate of $X(t)$ at the fusion center which is denoted by
$\hat{X}(t)$ (see  Figure~\ref{fig:network-diagrams}). For the case of unknown distribution paramter, the fusion center also computes an estimate of those parameters, $\hat{\theta}(t)$. To compute these three functions we define two distinct information structures:
\begin{eqnarray}
\mathcal{H}_c(t) & = & \{B(\tau), \hat{X}(\tau-1), Z(\tau), \hat{\theta}(\tau-1), \;
\forall \; \tau \leq t\}\\
\mathcal{H}_p(t) & = & \{B(\tau), \hat{X}(\tau), Z(\tau), \hat{\theta}(\tau), \;
\forall \; \tau \leq t-1\}
\end{eqnarray}
The corresponding functions are then given as follows:
\begin{eqnarray}
B(t) & = & \mu_1(\mathcal{H}_p(t))\\
\hat{X}(t) & = & \mu_2(\mathcal{H}_c(t))\\
\end{eqnarray}
We observe the sequential nature in applying the functions $\mu_i$, that is, the activation vector $B(t)$ determines the observations $Z(t)$ which in turn are used to compute the tracked process, $\hat{X}(t)$. For unknown $\theta_0$, we compute $\hat{\theta}(t)= \mu_3(\mathcal{H}_c(t)) $.
For an i.i.d. time varying process, $\mathcal{H}_c(t)$ is sufficient to estimate  $\hat{X}(t)$.  However, in order to optimally decide $B(t)$ when $\theta_0$ is unknown, the fusion center needs  knowledge about the performance of all past configurations. Hence, $\mathcal{H}_p(t)$ and $\mathcal{H}_c(t)$ have two different information structures. However, we will see that, our Gibbs sampling algorithm determines $B(t)$ by using only a sufficient statistic (which captures the past history) calculated iteratively in each slot.

We define a  policy $\mu=(\mu_1,\mu_2,\mu_3 )$ as a tuple of mappings, where $\mu_1(\mathcal{H}_p(t))=B(t)$, $\mu_2(\mathcal{H}_c(t))=\hat{X}(t)$ and $\mu_3(\mathcal{H}_c(t))=\hat{\theta}(t)$ as discussed earlier.  The policy $\mu$ may be randomized, where the quantities $B(t)$, $\hat{\theta}(t)$ and $\hat{X}(t)$ are chosen according to  random distributions defined by $\mu$; that is, $\mu_1(\mathcal{H}_p(t))$, $\mu_2(\mathcal{H}_c(t))$ and $\mu_3(\mathcal{H}_c(t))$ are three probability distributions for $B(t)$, $\hat{X}(t)$ and $\hat{\theta}(t)$, respectively. In the sequel, we will investigate Gibbs sampling strategies for sensor selection; therein, $B(t)$ will be random.

Our   goal is to solve the following centralized   problem of minimizing the time-average mean squared error (MSE) subject to a constraint on the mean number of active sensors per unit time:
\vspace*{-0.2in}
\begin{align}
&& \mu^*=\arg \min_{\mu} \limsup_{t \rightarrow \infty} \frac{1}{t} \sum_{\tau=1}^t \mathbb{E}_{\mu} ||X(\tau)-\hat{X}(\tau)||^2  \nonumber\\
&& s.t.\,\,\,\, \limsup_{t \rightarrow \infty} \frac{1}{t} \sum_{\tau=1}^t  \mathbb{E}_{\mu} ||B(\tau)||_1 \leq \bar{N} \tag{P1} \label{eqn:centralized-constrained-problem}
\end{align}
where $\mathbb{E}_{\mu}$ is the expectation under policy $\mu$, and the expectation is taken over the randomness in the process as well as any possible randomness in the policy $\mu$.

\section{IID proces with known distribution}
\label{section:gibbs-sampling-unconstrained-problem}
In this section, we  provide an algorithm for solving the centralized problem \eqref{eqn:centralized-constrained-problem} when $\{X(t)\}_{t \geq 0}$ is i.i.d. with   known distribution. This algorithm is developed as a precursor to the algorithms for tracking an i.i.d. process  having a parametric distribution  with an unknown parameter $\theta_0$.

\subsection{Relaxing the constrained problem}\label{subsection:relaxed-version-centralized-constrained-problem-iid-data}
In order to solve the constrained problem \eqref{eqn:centralized-constrained-problem}, we 
first relax \eqref{eqn:centralized-constrained-problem} by using a Lagrance multiplier $\lambda$, and obtain the following unconstrained problem:

\begin{equation} \label{eqn:centralized-unconstrained-problem} 
\mu^*=\arg \min_{\mu} \limsup_{t \rightarrow \infty} \frac{1}{t} \sum_{\tau=1}^t \mathbb{E}_{\mu} \bigg(  ||X(\tau)-\hat{X}(\tau)||^2 + \lambda  ||B(\tau)||_1 \bigg) \tag{P2}
\end{equation}
\normalsize
The multiplier $\lambda \geq 0$ can be viewed as the cost incurred for activating a sensor at any time instant. We will see later that solution of the unconstrained problem \eqref{eqn:centralized-unconstrained-problem} will be used to solve the constrained problem \eqref{eqn:centralized-constrained-problem}.

We observe that, at time $\tau$, for the chosen sensor subset $B(\tau)$ and the corresponding collected observations $Z(\tau)$, the minimum mean squared error (MMSE)  estimate of $X(\tau)$ is given by $\hat{X}(\tau)=\mathbb{E}(X(\tau)|\mathcal{H}_c(\tau))$; hence, {\em we fix the estimation policy $\mu_2$ and solve \eqref{eqn:centralized-unconstrained-problem} only over the sensor subset selection policy $\mu_1$ (since the distribution of $X(\tau)$ is known, $\mu_3$ has no relevance here)}. Since \eqref{eqn:centralized-unconstrained-problem} is an unconstrained problem and $X(\tau)$ is i.i.d. across $\tau$, there exists at least one optimizer $B^* \in \mathcal{B}$ (not necessarily unique) for the problem \eqref{eqn:centralized-unconstrained-problem}; if the configuration $B^*$ is chosen at each $t$, the minimum cost of \eqref{eqn:centralized-unconstrained-problem} can be achieved  (follows from the law of large numbers, since the cost incurred over time for a given $\mu_2$  constitutes an i.i.d. sequence whose mean is the optimal cost for \eqref{eqn:centralized-unconstrained-problem}). Hence,   \eqref{eqn:centralized-unconstrained-problem} can be   written as:
\begin{equation}\label{eqn:centralized-unconstrained-problem-with-f-and-h} 
\arg \min_{B \in \mathcal{B}} \underbrace{ \underbrace{\mathbb{E}_{\mu_2,B} ||X(\tau)-\hat{X}(\tau)||^2}_{:=f(B)}+ \lambda ||B||_1 }_{:=h(B)} \tag{P3}
\end{equation}
Here $f(B)$ (for any $B \in \mathcal{B}$) is the MSE under estimation policy $\mu_2$ when the sensor activation vector is $B$; $f(B)$ becomes equal to the MMSE under configuration $B$ if $\mu_2(\mathcal{H}_c(\tau))=\mathbb{E}(X(\tau))| \mathcal{H}_c(\tau)  )$. {\em Our results in this paper will hold for MMSE or any other general estimator.}

The following   result tells us how to choose the optimal $\lambda^*$ to solve   \eqref{eqn:centralized-constrained-problem}.

\begin{theorem}\label{theorem:relation-between-constrained-and-unconstrained-problems}
Consider problem \eqref{eqn:centralized-constrained-problem} and  its relaxed version \eqref{eqn:centralized-unconstrained-problem-with-f-and-h}. If there exists a Lagrange multiplier $\lambda^* \geq 0$ and a $B^* \in \mathcal{B}$, such that an optimal configuration for \eqref{eqn:centralized-unconstrained-problem-with-f-and-h}  under  $\lambda=\lambda^*$ is $B^*$, and the constraint in \eqref{eqn:centralized-constrained-problem}  is satisfied with equality under the pair 
$(B^*,\lambda^*)$, then $B^*$ is an optimal configuration for \eqref{eqn:centralized-constrained-problem}.

In general, if there exist multiple configurations $B_1^*,B_2^*, \cdots, B_m^*$, a multiplier  $\lambda^* \geq 0$, and a probability mass function $(p_1,p_2,\cdots,p_m)$  such that (i) each of $B_1^*,B_2^*,\cdots, B_m^*$ is optimal for problem~\eqref{eqn:centralized-unconstrained-problem-with-f-and-h}  under $\lambda^*$, and (ii) $\sum_{i=1}^m p_i ||B_i^*||_1=\bar{N}$,  then an optimal solution for \eqref{eqn:centralized-constrained-problem} is to select one $B_i^*$ independently according to the probability mass function noted above. 
\end{theorem}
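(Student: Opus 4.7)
The plan is to deploy the standard Lagrangian-duality sandwich for constrained average-cost problems, with the choice of estimator $\mu_2$ fixed throughout so that the per-slot expected squared error given $B(\tau)=B$ reduces to $f(B)$ by the i.i.d.\ structure of $\{X(t)\}$. Introduce shorthands $P_t(\mu)\doteq\tfrac{1}{t}\sum_{\tau=1}^t\mathbb{E}_\mu\|X(\tau)-\hat{X}(\tau)\|^2$ and $C_t(\mu)\doteq\tfrac{1}{t}\sum_{\tau=1}^t\mathbb{E}_\mu\|B(\tau)\|_1$, so that (P1) minimizes $\limsup_t P_t(\mu)$ subject to $\limsup_t C_t(\mu)\le\bar{N}$. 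For any admissible $\mu$, conditioning on $B(\tau)$ and using that $B^*$ minimizes $h(\cdot)=f(\cdot)+\lambda^*\|\cdot\|_1$ over $\mathcal{B}$ gives $\mathbb{E}_\mu[h(B(\tau))]\ge h(B^*)$ for every $\tau$; averaging in $\tau$ yields
\[
P_t(\mu)+\lambda^* C_t(\mu)\;\ge\; h(B^*)\qquad\text{for every }t\ge 1.
\]
Passing to $\limsup$, invoking the subadditivity $\limsup_t(P_t+\lambda^* C_t)\le\limsup_t P_t+\lambda^*\limsup_t C_t$, the feasibility $\limsup_t C_t(\mu)\le\bar{N}$, and the standing hypothesis $\|B^*\|_1=\bar{N}$, I obtain
\[
\limsup_t P_t(\mu)\;\ge\; h(B^*)-\lambda^*\bar{N}\;=\;f(B^*)+\lambda^*(\|B^*\|_1-\bar{N})\;=\;f(B^*).
\]
Since the stationary deterministic policy that picks $B^*$ in every slot is feasible and attains $P_t=f(B^*)$ for every $t$, it is optimal for (P1), establishing part~(i).

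For part~(ii), I take the randomized stationary policy $\pi$ that at each slot draws $B(t)=B_i^*$ independently with probability $p_i$. Because every $B_i^*$ is a minimizer of $h$ at $\lambda^*$, the one-slot inequality $\mathbb{E}_\mu h(B(\tau))\ge h^*$ still holds for any admissible $\mu$, and the same limsup manipulation gives $\limsup_t P_t(\mu)\ge h^*-\lambda^*\bar{N}=\sum_i p_i f(B_i^*)$, where the final equality uses $h^*=f(B_i^*)+\lambda^*\|B_i^*\|_1$ averaged against the $p_i$'s together with the balance condition $\sum_i p_i\|B_i^*\|_1=\bar{N}$. Under $\pi$ itself, the independence across slots produces $P_t(\pi)=\sum_i p_i f(B_i^*)$ and $C_t(\pi)=\sum_i p_i\|B_i^*\|_1=\bar{N}$, so $\pi$ is feasible and attains the lower bound, hence is optimal for (P1).

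The main technical point I will need to justify is the limsup step; since $\mathcal{B}$ is finite the sequences $P_t(\mu)$ and $C_t(\mu)$ are uniformly bounded in $t$, so subadditivity of $\limsup$ applies without pathology. Beyond that the argument is essentially algebraic: the complementary-slackness-type hypotheses ($\|B^*\|_1=\bar{N}$, respectively $\sum_i p_i\|B_i^*\|_1=\bar{N}$) are exactly what cancel the residual $\lambda^*(\bar{N}-\|B^*\|_1)$ term and convert the $\lambda^*$-penalized optimum of (P3) into the constrained optimum of (P1). No further structure on the estimator $\mu_2$ is required, which is consistent with the paper's remark that the result holds for MMSE or for any other fixed estimator.
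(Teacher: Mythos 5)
Your proof is correct, and it follows the same underlying idea as the paper's — the Lagrangian sandwich plus the complementary-slackness cancellation $\lambda^*(\|B^*\|_1-\bar{N})=0$ — but your execution is genuinely more careful and slightly more general. The paper's proof of part~(i) is two lines: it denotes ``the optimizer for (P1)'' by a single configuration $B$, writes $f(B^*)+\lambda^*\|B^*\|_1\le f(B)+\lambda^*\|B\|_1$, and concludes from $\|B\|_1\le\bar N$ and $\|B^*\|_1=\bar N$ that $f(B^*)\le f(B)$; part~(ii) is dismissed as ``similar.'' This implicitly treats the competitor in (P1) as a stationary deterministic configuration, even though (P1) ranges over arbitrary randomized, history-dependent policies — and indeed the whole point of part~(ii) is that the constrained optimum may require randomization, so restricting competitors to fixed configurations is not innocuous. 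Your version closes this gap: by conditioning on $B(\tau)$ (legitimate because $X(\tau)$ is independent of $\mathcal{H}_p(\tau)$, so the conditional per-slot error given $B(\tau)=B$ is exactly $f(B)$), you get the pointwise-in-$t$ bound $P_t(\mu)+\lambda^*C_t(\mu)\ge h(B^*)$ for every admissible $\mu$, and the limsup subadditivity step (valid since both sequences are bounded, $\mathcal{B}$ being finite) converts feasibility into $\limsup_t P_t(\mu)\ge f(B^*)$. You also write out part~(ii) explicitly, verifying that the randomized stationary policy attains the bound. The only thing your argument buys beyond the paper's is rigor over the full policy class; the algebra is identical.
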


\begin{proof}
See Appendix~\ref{appendix:proof-of-relation-between-constrained-and-unconstrained-problems}.
\end{proof}
Theorem~\ref{theorem:relation-between-constrained-and-unconstrained-problems} allows us to obtain a solution for \eqref{eqn:centralized-constrained-problem} from the solution of \eqref{eqn:centralized-unconstrained-problem-with-f-and-h}   by choosing an appropriate $\lambda^*$; the existence of   $\lambda^*$ will be discussed in  Section~\ref{subsection:gibbs-stochastic-approximation-unconstrained-problem}.

\subsection{Solving \eqref{eqn:centralized-unconstrained-problem} and \eqref{eqn:centralized-unconstrained-problem-with-f-and-h} for known distribution}\label{subsection:finite-beta-gibbs-sampling-unconstrained-problem}
Finding the optimal solution of  \eqref{eqn:centralized-unconstrained-problem} and \eqref{eqn:centralized-unconstrained-problem-with-f-and-h} requires us to search over  $2^N$ possible configurations and to compute the MSE for each   configuration.  Hence, we propose  Gibbs sampling based algorithms to avoid this $O(2^N)$ computation.

Let us define a probability distribution   $\pi_{\beta}(\cdot)$ over $\mathcal{B}$ as (with a parameter $\beta>0$):
\begin{eqnarray}\label{eqn:definition-of-Gibbs-distribution}
\pi_{\beta}(B)\doteq\frac{e^{-\beta h(B)}}{\sum_{B' \in \mathcal{B}}e^{-\beta h(B')}} \doteq \frac{e^{-\beta h(B)}}{Z_{\beta}}.
\end{eqnarray}
Following the terminology in statistical physics,  we call   $\beta$ the {\em inverse temperature}, and $Z_{\beta}$ the {\em partition function}. The quantity $h(B)$ is viewed as the {\em energy under configuration $B$. } 
It is straightforward to see that $\lim_{\beta \uparrow \infty} \sum_{ B \in \arg \min_{A \in \mathcal{B}} h(A) } \pi_{\beta}(B)=1$. Hence, if a configuration $B(t)$ is selected at each time  $t$ with probability distribution $\pi_{\beta}(\cdot)$ for sufficiently large $\beta>0$, then $B(t)$ will belong to the set of minimizers of \eqref{eqn:centralized-unconstrained-problem-with-f-and-h} with high probability; if $\Delta_1:=\min_{B' \in \mathcal{B}:h(B')\neq \min_{B \in \mathcal{B}}h(B)} h(B')-\min_{B \in \mathcal{B}}h(B)$, then, for a {\em unique} minimizer $B^*$, $\pi_{\beta}(B^*) \geq \frac{1}{1+(2^N-1)e^{-\beta \Delta_1}}$ (which goes to $1$ as $\beta \rightarrow \infty$).   However, computing   $Z_{\beta}$ requires $2^N$ addition operations; hence, we use a sequential subset selection algorithm based on Gibbs sampling (see  \cite[Chapter~$7$]{breamud99gibbs-sampling})  in order to avoid explicit computation of $Z_{\beta}$ while picking $B(t) \sim \pi_{\beta}(\cdot)$. 

Below we introduce the Basic Gibbs (BG) algorithm.

\noindent\fbox{
    \parbox{0.47 \textwidth}{
{\bf BG algorithm:}     Start with an initial configuration $B(0)$.  At time~$t$, pick a random sensor~$j_t$ uniformly from the set of all sensors.   Choose $B_{j_t}(t)=1$ with probability 
$p(t):=\frac{ e^{-\beta h(B_{-j_t}(t-1),1)}}{e^{-\beta h(B_{-j_t}(t-1),1)}+e^{-\beta h(B_{-j_t}(t-1),0)}}$ and choose $B_{j_t}(t)=0$ with probability 
$(1-p(t))$. 
For $k  \neq j_t$,  choose $B_k(t)=B_k(t-1)$.  Activate the sensors according to $B(t)$.
    }
}


Note that, in this algorithm,  it is sufficient to maintain $\mathcal{H}_p(t)=B(t-1)$ due to the i.i.d. nature of $X(t)$.

\begin{theorem}\label{theorem:convergence-basic-Gibbs-sampling}
Under  the BG algorithm,  $\{B(t)\}_{t \geq 0}$ is a reversible, ergodic, time-homogeneous Markov chain  with stationary distribution $\pi_{\beta}(\cdot)$.
\end{theorem}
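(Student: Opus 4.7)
The plan is to verify in turn that $\{B(t)\}$ is time-homogeneous Markov, that it satisfies detailed balance with respect to $\pi_\beta$, and that it is irreducible and aperiodic on the finite state space $\mathcal{B}$; together these yield the three properties claimed. First I would read off the one-step transition kernel $P(B,B')$ directly from the BG update rule: pick $j_t$ uniformly in $\{1,\ldots,N\}$ and then draw $B_{j_t}(t)$ from the Gibbs conditional given $B_{-j_t}(t-1)$. Because this recipe depends only on $B(t-1)$ and fresh independent randomness, and the recipe itself does not depend on $t$, the chain is manifestly Markov and time-homogeneous on the finite space $\mathcal{B}$.

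Next I would check the detailed balance equations $\pi_\beta(B) P(B,B') = \pi_\beta(B') P(B',B)$. The key structural observation is that $P(B,B')=0$ whenever $B$ and $B'$ differ in more than one coordinate, so the only non-trivial case is when $B$ and $B'$ agree on all coordinates except some $j$; say $B_j=0$ and $B'_j=1$. In that case $P(B,B')=\tfrac{1}{N}\cdot e^{-\beta h(B_{-j},1)}/\bigl(e^{-\beta h(B_{-j},0)}+e^{-\beta h(B_{-j},1)}\bigr)$ and symmetrically for $P(B',B)$; combined with $\pi_\beta(B) = e^{-\beta h(B_{-j},0)}/Z_\beta$ and $\pi_\beta(B')=e^{-\beta h(B_{-j},1)}/Z_\beta$, both sides of the detailed balance equation collapse to the common symmetric quantity $\frac{e^{-\beta h(B_{-j},0)} e^{-\beta h(B_{-j},1)}}{N\, Z_\beta \,\bigl(e^{-\beta h(B_{-j},0)}+e^{-\beta h(B_{-j},1)}\bigr)}$. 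Detailed balance then immediately delivers both reversibility and the fact that $\pi_\beta$ is a stationary distribution of $P$.

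Finally, for ergodicity on the finite state space $\mathcal{B}$ it suffices to establish irreducibility and aperiodicity. Irreducibility follows because one can move from any $B$ to any $B'$ in at most $N$ successive single-coordinate flips, and each such flip has strictly positive probability: $h$ takes finite values on $\mathcal{B}$, so both Gibbs conditionals lie in $(0,1)$, and any fixed coordinate is picked by $j_t$ with probability $1/N$. Aperiodicity is even easier, since $P(B,B)>0$ for every $B$; with positive probability the algorithm picks some coordinate $j_t$ and resamples $B_{j_t}$ to its current value, yielding a self-loop. Combining irreducibility and aperiodicity with the detailed balance already verified gives that $\{B(t)\}$ is a reversible, ergodic, time-homogeneous Markov chain whose unique stationary distribution is $\pi_\beta$. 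I do not anticipate any genuine obstacle here; the only step needing even mild care is the algebraic bookkeeping in the detailed balance computation, where one must track consistently which coordinate the two configurations differ in.
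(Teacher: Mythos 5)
Your proposal is correct and follows exactly the route the paper indicates: the paper's proof simply defers to the standard Gibbs-sampler theory in Br\'emaud and states that one verifies the detailed balance equation, which is precisely the computation you carry out (together with the routine irreducibility and aperiodicity checks on the finite state space). Your detailed balance algebra and the positivity arguments are all sound, so this is a fully written-out version of the paper's own argument rather than a different approach.
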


\begin{proof}
Follows from the theory  in \cite[Chapter~$7$]{breamud99gibbs-sampling}). The proof can be done by verifying the detailed balance equation for the Markov chain $B(t)$.
\end{proof}

Theorem~\ref{theorem:convergence-basic-Gibbs-sampling} tells us that if the fusion center runs  BG and reaches the steady state distribution of the Markov chain $\{B(t)\}_{t \geq 0}$, then the configuration chosen by the algorithm will have distribution $\pi_{\beta}(\cdot)$.  Also, by the ergodicity of $\{B(t)\}_{t \geq 0}$, the time-average occurence rates of all configurations match the distribution $\pi_{\beta}(\cdot)$ almost surely.

For very large  $\beta>0$, if one runs $\{B(t)\}_{t \geq 0}$ for a sufficiently long, finite time $T_0$, then the terminal state $B(T_0)$ will belong to 
$\arg \min_{B \in \mathcal{B}} h(B)$ with high probability. 
We have already shown that, for a {\em unique} minimizer $B^*$, $\pi_{\beta}(B^*) \geq \frac{1}{1+(2^N-1)e^{-\beta \Delta_1}}$ (which goes to $1$ as $\beta \rightarrow \infty$). In Section~\ref{subsection:convergence-rate-BG}, we will provide an upper bound on $d_V(\pi^{(t)}, \pi_{\beta})$ which is the total variation distance between the distribution of $B(t)$ under BG algorithm and the distribution $\pi_{\beta}$; the upper bound is $d_V(\pi^{(t)},\pi_{\beta}) \leq d_V(\pi^{(0)},\pi_{\beta}) (1-\frac{e^{-\beta N \Delta}}{N^N})^{\lfloor \frac{t}{N} \rfloor}$, where 
$\Delta:=\max_{B \in \mathcal{B}, A \in \mathcal{B}}|h(B)-h(A)|$.  Hence, for a large but finite time $T_0$, we can derive the following bound: 
\begin{eqnarray*}
\pi^{(T_0)}(B^*) & \geq & \pi_{\beta}(B^*)-2 d_V(\pi^{(T_0)},\pi_{\beta})\\
& \geq & \frac{1}{1+(2^N-1)e^{-\beta \Delta_1}}\\
&&- 2 d_V(\pi^{(0)},\pi_{\beta}) \bigg(1-\frac{e^{-\beta N \Delta}}{N^N} \bigg)^{\lfloor \frac{T_0}{N} \rfloor}
\end{eqnarray*}

\subsection{The exact solution}\label{subsection:growing-beta-gibbs-sampling-unconstrained-problem}
BG  is operated with a fixed $\beta$, but  the optimal solution of the unconstrained problem~\eqref{eqn:centralized-unconstrained-problem} can only be  obtained with 
$\beta \uparrow \infty$; this is done by updating  $\beta$  at a slower time-scale than the iterates of BG algorithm. The quantity $\beta(t)$ is increased logarithmically with time in order to maintain the necessary timescale difference between Gibbs sampling and $\beta(t)$ update. We call this new algorithm Adaptive Basic Gibbs or ABG.

\noindent\fbox{
    \parbox{0.47 \textwidth}{
{\bf ABG algorithm:}  This algorithm is same as BG except that at   time $t$, we use  
$\beta(t):=\beta(0) \log (1+t)$   to compute the update probabilities, where $\beta(0)>0$, $\beta(0) N \Delta<1$, and   $\Delta:=\max_{B \in \mathcal{B}, A \in \mathcal{B}}|h(B)-h(A)|$.
    }
}


\begin{theorem}\label{theorem:result-on-weak-and-strong-ergodicity}
Under the ABG algorithm, the Markov chain $\{B(t)\}_{t \geq 0}$ is strongly ergodic, and the limiting probability distribution satisfies $\lim_{t \rightarrow \infty} \sum_{A \in \arg \min_{C \in \mathcal{B}} h(C) }\mathbb{P}(B(t)=A)=1$.
\end{theorem}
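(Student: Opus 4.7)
The plan is to invoke the classical strong-ergodicity theorem for time-inhomogeneous Markov chains on finite state spaces \cite[Chapter~7]{breamud99gibbs-sampling}, whose two hypotheses are (i) weak ergodicity, established via the Dobrushin coefficient of block kernels, and (ii) summability of $\sum_t d_V(\pi_{\beta(t+1)},\pi_{\beta(t)})$. By Theorem~\ref{theorem:convergence-basic-Gibbs-sampling}, the ABG kernel $P_{\beta(t)}$ at time $t$ admits $\pi_{\beta(t)}$ as its stationary distribution, so the theorem yields strong ergodicity with limit $\pi_\infty := \lim_{t\to\infty}\pi_{\beta(t)}$. A short separate calculation of $\lim_{\beta\to\infty}\pi_\beta$ then identifies $\pi_\infty$ as the uniform distribution on $\arg\min_{B \in \mathcal{B}} h(B)$.

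For weak ergodicity I would lower bound every entry of the $N$-step kernel uniformly in starting and ending configurations. At a single Gibbs step, the coordinate is picked with probability $1/N$ and the probability of assigning it any prescribed value is at least $(1+e^{\beta(t)\Delta})^{-1} \geq \tfrac{1}{2}e^{-\beta(t)\Delta}$. Concatenating $N$ steps that visit coordinates $1,2,\cdots,N$ in order and drive each to its target value yields
\[ P_{t+N-1}\cdots P_t(B, B') \;\geq\; \alpha_t \;:=\; \frac{(t+N)^{-\beta(0)N\Delta}}{(2N)^N} \]
for all $B, B' \in \mathcal{B}$. Consequently the Dobrushin coefficient satisfies $\delta(P_{t+N-1}\cdots P_t) \leq 1 - 2^N\alpha_t$. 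The assumption $\beta(0)N\Delta < 1$ makes $\sum_k (kN+N)^{-\beta(0)N\Delta} = \infty$, so the product of Dobrushin coefficients over disjoint length-$N$ blocks tends to zero, giving weak ergodicity.

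For the summability, differentiation of $\pi_\beta(B) = e^{-\beta h(B)}/Z_\beta$ gives $\tfrac{d}{d\beta}\pi_\beta(B) = -\pi_\beta(B)(h(B)-\bar{h}_\beta)$ with $\bar{h}_\beta := \mathbb{E}_{\pi_\beta} h$, so that $\sum_B |d\pi_\beta(B)/d\beta| = \mathbb{E}_{\pi_\beta}|h - \bar{h}_\beta|$. This quantity is at most $\Delta$ for all $\beta \geq 0$ and, for large $\beta$, decays like $O(e^{-\beta\Delta_1})$ since $\pi_\beta(B)/\pi_\beta(B^*) \leq e^{-\beta\Delta_1}$ for any minimizer $B^*$ and any $B \notin \arg\min h$, which forces the non-minimizer mass to vanish exponentially. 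Hence $\int_0^\infty \sum_B |d\pi_\beta(B)/d\beta|\,d\beta < \infty$. Because $\beta(t)$ is monotonically increasing and covers $[0,\infty)$, the intervals $[\beta(t),\beta(t+1)]$ are disjoint, and a Tonelli/fundamental-theorem-of-calculus argument yields
\[ 2\sum_{t \geq 0} d_V(\pi_{\beta(t+1)}, \pi_{\beta(t)}) \;\leq\; \int_0^\infty \sum_B \left|\frac{d\pi_\beta(B)}{d\beta}\right| d\beta \;<\; \infty. \]
The Bremaud theorem then delivers strong ergodicity with limit $\pi_\infty$, and the same ratio bound shows $\pi_\beta$ converges to the uniform distribution on $\arg\min h$, so $\pi_\infty(\arg\min h) = 1$, which is the desired conclusion.

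The main obstacle is the summability step. The naive Lipschitz estimate $d_V(\pi_{\beta(t+1)},\pi_{\beta(t)}) \leq \tfrac{\Delta}{2}(\beta(t+1)-\beta(t)) \sim \beta(0)\Delta/(2t)$ is only harmonic and hence not summable under the logarithmic schedule; it is essential to exploit the exponential concentration of $\pi_\beta$ at large $\beta$, which supplies the rapid decay that renders the whole map $\beta \mapsto \pi_\beta$ of finite total variation on $[0,\infty)$. The gap $\Delta_1 > 0$ between the minimum and the second-smallest value of $h$ is precisely what drives this concentration and closes the proof.
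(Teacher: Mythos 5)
Your proof is correct and follows the same skeleton as the paper's: weak ergodicity via a uniform lower bound on the $N$-step block kernel and the Dobrushin-coefficient divergence criterion (your bound $\alpha_t=(t+N)^{-\beta(0)N\Delta}/(2N)^N$ and the use of $\beta(0)N\Delta<1$ are exactly the paper's computation), followed by the Bremaud strong-ergodicity theorem, whose first hypothesis you check identically via Theorem~\ref{theorem:convergence-basic-Gibbs-sampling}. Where you genuinely diverge is in verifying the summability condition $\sum_t d_V(\pi_{\beta(t+1)},\pi_{\beta(t)})<\infty$. The paper argues that for each fixed $B$ the map $T_0\mapsto\pi_{\beta_{T_0}}(B)$ is eventually monotone (increasing on minimizers, decreasing elsewhere), so the sum of absolute successive differences telescopes and is bounded; this is short but qualitative. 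You instead compute $\tfrac{d}{d\beta}\pi_\beta(B)=-\pi_\beta(B)(h(B)-\bar h_\beta)$, identify $\sum_B|d\pi_\beta(B)/d\beta|=\mathbb{E}_{\pi_\beta}|h-\bar h_\beta|$, and show this is integrable on $[0,\infty)$ by exploiting the $O(e^{-\beta\Delta_1})$ concentration of $\pi_\beta$ off the minimizing set, then dominate the telescoping sum by the integral. Your route is more quantitative (it yields an explicit bound on the total variation of the path $\beta\mapsto\pi_\beta$ and makes clear that the spectral gap $\Delta_1>0$ is what saves the logarithmic schedule, since the naive Lipschitz bound is only harmonic), at the cost of needing the degenerate case where $h$ is constant on $\mathcal{B}$ (so that $\Delta_1$ is undefined) to be handled separately — trivially, since $\pi_\beta$ is then uniform for all $\beta$. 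Both verifications are valid, and your identification of the limit as the uniform distribution on $\arg\min_{B}h(B)$ is a slightly stronger conclusion than the mass-concentration statement the theorem asserts.
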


\begin{proof}
See Appendix~\ref{appendix:proof-of-weak-and-strong-ergodicity}.  
We have used the notion of weak and strong ergodicity of time-inhomogeneous Markov chains from 
\cite[Chapter~$6$, Section~$8$]{breamud99gibbs-sampling}), which is  provided in Appendix~\ref{appendix:weak-and-strong-ergodicity}. The proof is similar to the proof of  \cite[Theorem~$2$]{chattopadhyay-etal16gibbsian-caching-arxiv}, but is given here for completeness.
\end{proof}

Theorem~\ref{theorem:result-on-weak-and-strong-ergodicity} shows that we can solve \eqref{eqn:centralized-unconstrained-problem} {\em exactly} if we run ABG for infinite time, in contrast to BG which   provides an approximate solution.

For  i.i.d. time varying $\{ X(t) \}_{t \geq 0}$ with known joint distribution, we can either: (i) find the optimal configuration $B^*$ using ABG off-line  and use $B^*$ for ever, or (ii) run ABG  at the same timescale as $t$, and use the current configuration $B(t)$ for sensor activation; both schemes will minimize the  cost in \eqref{eqn:centralized-unconstrained-problem}. By the strong ergodicity of $\{B(t)\}_{t \geq 0}$, optimal cost will be achieved for \eqref{eqn:centralized-unconstrained-problem} under ABG.

\subsection{Convergence rate of BG and ABG}\label{subsection:convergence-rate-BG}
Let $\pi^{(t)}$ denote the probability distribution of $B(t)$ under BG.   
Let us consider the transition probability matrix $P$ of the Markov chain $\{Y(l)\}_{l \geq 0}$ with  $Y(l)=B(lN)$, under BG. Let us recall the definition of the Dobrushin's ergodic coefficient $\delta(P)$ from \cite[Chapter~$6$, Section~$7$]{breamud99gibbs-sampling} for the matrix $P$; using a method similar to that of the proof of Theorem~\ref{theorem:result-on-weak-and-strong-ergodicity}, we can show that $\delta(P) \leq ( 1-\frac{ e^{-\beta N \Delta} }{N^N})$. 
  Then, by \cite[Chapter~$6$, Theorem~$7.2$]{breamud99gibbs-sampling}, we can say that under  BG, we have $d_V(\pi^{(lN)},\pi_{\beta}) \leq d_V(\pi^{(0)},\pi_{\beta}) \bigg( 1-\frac{ e^{-\beta N \Delta} }{N^N}  \bigg)^l$. We can prove similar bounds for any $t=lN+k$, where $0 \leq k \leq N-1$.
  
Clearly, under the BG algorithm, the convergence rate decreases as $\beta$ increases. Hence, there is a trade-off between convergence rate and accuracy of the solution in this case. Also, the rate of convergence decreases with $N$. 

Such a closed-form convergence rate bound for ABG is not easily available. 
For the ABG algorithm,  the convergence rate is expected to decrease with time, since the value of $\beta(t)$ increases to $\infty$.

\subsection{Gibbs sampling and stochastic approximation for \eqref{eqn:centralized-constrained-problem}}
\label{subsection:gibbs-stochastic-approximation-unconstrained-problem}

In Section~\ref{subsection:finite-beta-gibbs-sampling-unconstrained-problem} and Section~\ref{subsection:growing-beta-gibbs-sampling-unconstrained-problem}, we presented Gibbs sampling based algorithms for the unconstrained problem  \eqref{eqn:centralized-unconstrained-problem}. Now we provide an algorithm that updates $\lambda$ with time in order to meet the constraint in 
\eqref{eqn:centralized-constrained-problem} with equality, and thereby solves \eqref{eqn:centralized-constrained-problem}  (see Theorem~\ref{theorem:relation-between-constrained-and-unconstrained-problems}) by solving the unconstrained problem. 

Let us denote the optimal configuration for \eqref{eqn:centralized-unconstrained-problem-with-f-and-h} under a given estimation strategy $\mu_2$ (which could be the MMSE estimator) by $B^*$.

\begin{lemma}\label{lemma:active-sensors-decreasing-in-lambda}
For the unconstrained problem~\eqref{eqn:centralized-unconstrained-problem-with-f-and-h}, the optimal mean number of active sensors, $\mathbb{E}_{\mu_2}||B^*||_1$, decreases with $\lambda$. Similarly, the optimal 
error, $\mathbb{E}_{\mu_2}f(B^*)$, increases with $\lambda$.
\end{lemma}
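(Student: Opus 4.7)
The claim is a standard monotonicity property of parametric Lagrangian minimization, and I would prove it by the usual interchange/swap argument. Fix the estimation rule $\mu_2$ and, for each $\lambda \ge 0$, let $B^*(\lambda) \in \arg\min_{B \in \mathcal{B}} \bigl(f(B) + \lambda \|B\|_1\bigr)$ be \emph{any} minimizer, with $f(B) = \mathbb{E}_{\mu_2,B}\|X-\hat X\|^2$. Take any $0 \le \lambda_1 < \lambda_2$ and abbreviate $B_i^* := B^*(\lambda_i)$, $n_i := \|B_i^*\|_1$, $f_i := f(B_i^*)$.

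By the optimality of $B_1^*$ at $\lambda_1$ and of $B_2^*$ at $\lambda_2$, I have the two inequalities
\begin{align*}
f_1 + \lambda_1 n_1 &\le f_2 + \lambda_1 n_2, \\
f_2 + \lambda_2 n_2 &\le f_1 + \lambda_2 n_1.
\end{align*}
Adding them cancels the $f$-terms and yields $(\lambda_2-\lambda_1)(n_1 - n_2) \ge 0$. Since $\lambda_2 > \lambda_1$, this gives $n_1 \ge n_2$, which is exactly the monotonicity of the optimal mean number of active sensors in $\lambda$ (note that $\|B^*\|_1$ is deterministic given $B^*$, so $\mathbb{E}_{\mu_2}\|B^*\|_1 = \|B^*\|_1$). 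Substituting $n_1 \ge n_2$ back into the first inequality gives $f_1 - f_2 \le \lambda_1(n_2 - n_1) \le 0$, hence $f_1 \le f_2$, establishing that the optimal error is nondecreasing in $\lambda$.

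\textbf{Where the subtleties lie.} The main thing to be careful about is that $B^*(\lambda)$ need not be unique: at the same $\lambda$ there may be several minimizers with different $\|B\|_1$ (and different $f$-values). I would handle this exactly the way the statement implicitly intends, namely interpret ``decreases'' and ``increases'' in the weak (non-strict) sense and note that the argument above is valid for \emph{any} selection of minimizers $B_1^*$ and $B_2^*$, so both $n$ and $f$ are monotone as set-valued functions of $\lambda$ (any selection of $n(\lambda)$ is non-increasing, any selection of $f(\lambda)$ is non-decreasing). No other machinery—convexity of $f$, continuity, or differentiability in $\lambda$—is needed, because the swap argument uses only the defining optimality of $B^*(\lambda)$ for its own $\lambda$. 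Overall this is a short proof whose only genuine care is in the treatment of non-uniqueness.
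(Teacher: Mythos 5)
Your proof is correct and uses essentially the same exchange argument as the paper: write the two optimality inequalities at $\lambda_1$ and $\lambda_2$, add them to cancel the $f$-terms, and deduce monotonicity of $\|B^*\|_1$; the paper leaves the second claim as ``similar arguments'' while you explicitly substitute back, which is the intended completion. Your remark on handling non-uniqueness via weak monotonicity of any selection of minimizers is a fair and harmless refinement of what the paper states.
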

\begin{proof}
See Appendix~\ref{appendix:proof-of-active-sensors-decreasing-in-lambda}.
\end{proof}

Lemma~\ref{lemma:active-sensors-decreasing-in-lambda} provides   intuition as to how to update $\lambda$ in  BG or in ABG in order  to solve \eqref{eqn:centralized-constrained-problem}.   We  seek to provide one  algorithm which updates $\lambda(t)$ at each time instant, based on the number of active sensors in the previous time instant. In order to maintain the necessary timescale difference between the $\{B(t)\}_{t \geq 0}$ process and the $\lambda(t)$ update process, we use stochastic approximation (\cite{borkar08stochastic-approximation-book}) based update rules for $\lambda(t)$. 

The optimal mean number of active sensors, $\mathbb{E}_{\mu_2}||B^*||_1$, for the unconstrained problem~\eqref{eqn:centralized-unconstrained-problem-with-f-and-h} is a decreasing staircase function of $\lambda$, where each point of discontinuity is associated with a change in the optimizer $B^*(\lambda)$. 
Hence, the optimal solution of the constrained problem~\eqref{eqn:centralized-constrained-problem} requires us to randomize between two values of $\lambda$ (and therefore between two configurations) in case the optimal $\lambda^*$ as in 
Theorem~\ref{theorem:relation-between-constrained-and-unconstrained-problems} belongs to the set of such discontinuities. However, this randomization will require us to update a randomization probability at another timescale; having stochastic approximations running in multiple timescales leads to  slow convergence. Hence, instead of using a varying $\beta(t)$, we use a fixed, but large $\beta$ and update $\lambda(t)$ in an iterative fashion using stochastic approximation; BG itself is a  randomized subset selection algorithm and hence no further randomization is required to meet the constraint in \eqref{eqn:centralized-constrained-problem} with equality.  This observation is formalized in the following lemma. This lemma will be crucial in the convergence proof of  the Gibbs Learn (GL) algorithm proposed later.

\begin{lemma}\label{lemma:active-sensors-decreasing-in-lambda-under-basic-gibbs-sampling}
Under BG,  
$\mathbb{E}_{\mu_2} ||B(t)||_1$ is a Lipschitz continuous and decreasing function of $\lambda$.
\end{lemma}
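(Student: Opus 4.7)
The plan is to identify $\mathbb{E}_{\mu_2}\|B(t)\|_1$ in the lemma with the steady-state expectation of $\|B\|_1$ under BG with parameter $\lambda$; this is the natural reading in light of Theorem~\ref{theorem:convergence-basic-Gibbs-sampling}, which establishes that $\{B(t)\}_{t\geq 0}$ is an ergodic reversible chain with stationary distribution $\pi_\beta(B) = e^{-\beta(f(B)+\lambda\|B\|_1)}/Z_\beta(\lambda)$, and it is also the version needed for a two-timescale stochastic-approximation argument where the $\lambda$-iterate moves on the slow timescale. Accordingly, I would set
\begin{equation*}
g(\lambda) \; := \; \mathbb{E}_{B\sim \pi_\beta(\cdot;\lambda)}\|B\|_1 \; = \; \frac{\sum_{B\in\mathcal{B}} \|B\|_1 \, e^{-\beta f(B)}\, e^{-\beta\lambda\|B\|_1}}{\sum_{B\in\mathcal{B}} e^{-\beta f(B)}\, e^{-\beta\lambda\|B\|_1}}
\end{equation*}
and work directly with this expression.

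Next I would observe that $g$ is a ratio of two finite sums of exponentials in $\lambda$, the denominator $Z_\beta(\lambda)$ being strictly positive, so $g$ is real-analytic (in particular $C^1$) on all of $\mathbb{R}$. A direct differentiation, or equivalently the identity $g(\lambda) = -\beta^{-1}\,\partial_\lambda \log Z_\beta(\lambda)$, yields the standard Gibbs-measure formula
\begin{equation*}
\frac{dg}{d\lambda} \; = \; -\beta\,\bigl(\mathbb{E}_{\pi_\beta}\|B\|_1^2 - (\mathbb{E}_{\pi_\beta}\|B\|_1)^2\bigr) \; = \; -\beta\,\mathrm{Var}_{\pi_\beta}(\|B\|_1) \;\leq\; 0,
\end{equation*}
with equality only if $\pi_\beta$ is supported on configurations of a common cardinality. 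Monotonicity of $g$ is then immediate from $dg/d\lambda \leq 0$.

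For Lipschitz continuity I would use the uniform bound $\mathrm{Var}_{\pi_\beta}(\|B\|_1) \leq N^2/4$, valid because $\|B\|_1$ takes values in $[0,N]$; this gives $|g'(\lambda)| \leq \beta N^2/4$ uniformly in $\lambda$, whence $g$ is globally Lipschitz with constant $\beta N^2/4$ by the mean value theorem. This constant is independent of the initial distribution of the BG chain, which is what matters for the ODE analysis invoked later in the GL convergence proof.

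The main obstacle I anticipate is interpretational rather than computational: if the authors actually intend $\mathbb{E}_{\mu_2}\|B(t)\|_1$ in the transient sense, a per-$t$ argument would require induction on $t$ using smoothness of the one-step transition kernel $P_\lambda(B,B')$ (each nonzero entry is a sigmoid in $\lambda$ via the logit $\beta(f(B_{-j},1)-f(B_{-j},0)+\lambda)$) and the chain-rule expansion $\partial_\lambda P_\lambda^t = \sum_{s=0}^{t-1} P_\lambda^s (\partial_\lambda P_\lambda) P_\lambda^{t-1-s}$. Smoothness transfers, but a naive bound on the derivative grows linearly in $t$; one would then need either to invoke geometric ergodicity together with the cancellation that $\partial_\lambda P_\lambda$ has row-sums zero, or to pass to the stationary limit and use the bound above. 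Either way the stationary-distribution route is clean, uniform in $t$, and sufficient for the downstream convergence argument.
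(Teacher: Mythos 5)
Your proposal is correct and follows essentially the same route as the paper: both identify $\mathbb{E}_{\mu_2}\|B(t)\|_1$ with the stationary expectation $\sum_{B}\|B\|_1 e^{-\beta h(B)}/Z_\beta$, differentiate in $\lambda$ to obtain $g'(\lambda)=-\beta\,\mathrm{Var}_{\pi_\beta}(\|B\|_1)\le 0$ (the paper phrases this as the inequality $\mathbb{E}\|B\|_1\le \mathbb{E}\|B\|_1^2/\mathbb{E}\|B\|_1$), and then bound the derivative uniformly to get Lipschitz continuity. Your explicit variance identity and the Popoviciu bound $\beta N^2/4$ are in fact slightly cleaner than the paper's bound $(\beta+1)N^2$, but the argument is the same.
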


\begin{proof}
See Appendix~\ref{appendix:proof-of-active-sensors-decreasing-in-lambda-under-basic-gibbs-sampling}.
\end{proof}

We make the following feasibility assumption for \eqref{eqn:centralized-constrained-problem}, under BG with the chosen $\beta>0$. 
\begin{assumption}\label{assumption:existence-of-optimal-lambda}
There exists $\lambda^* \geq 0$ such that the constraint in \eqref{eqn:centralized-constrained-problem} under 
$\lambda^*$ and BG is met with equality.
\end{assumption}
Note that, by Lemma~\ref{lemma:active-sensors-decreasing-in-lambda-under-basic-gibbs-sampling}, $\mathbb{E}_{\mu_2}||B||_1$ continuously decreases in $\lambda$. Hence, if $\bar{N}$ is feasible, then such a $\lambda^*$ must exist by the {\em intermediate value theorem}. Our proposed   Gibbs Learn (GL) algorithm updates $\lambda(t)$ iteratively in order to  solve \eqref{eqn:centralized-constrained-problem}. Let us define:
$h_{\lambda(t)}(B):=f(B)+\lambda(t) ||B||_1$ (recall the notation from \eqref{eqn:centralized-unconstrained-problem-with-f-and-h}). Now, we formally describe the GL algorithm to solve the constrained problem \eqref{eqn:centralized-constrained-problem}.

\noindent\fbox{
    \parbox{0.47 \textwidth}{
{\bf GL algorithm:}  \begin{enumerate}

\item  Choose any initial $B(0) \in \{0,1\}^N$ and $\lambda(0) \geq 0$. 
 
\item At each discrete time instant $t=0,1,2,\cdots$, pick a random sensor $j_t \in \mathcal{N}$ independently and uniformly. For sensor $j_t$, choose $B_{j_t}(t)=1$ with probability 
$p:=\frac{ e^{-\beta h_{\lambda(t)}(B_{-j_t}(t-1),1)}}{e^{-\beta h_{\lambda(t)}(B_{-j_t}(t-1),1)}+e^{-\beta h_{\lambda(t)}(B_{-j_t}(t-1),0)}}$ and choose $B_{j_t}(t)=0$ with probability 
$(1-p)$. For $k  \neq j_t$, we choose $B_k(t)=B_k(t-1)$. 

\item Update  $\lambda(t)$ at each node as follows: 

$$\lambda(t+1)=[\lambda(t)+a(t) (||B(t-1)||_1-\bar{N})]_b^c$$

The stepsize $\{a(t)\}_{t \geq 1}$ constitutes a positive sequence such that $\sum_{t=1}^{\infty}a(t)=\infty$  and $\sum_{t=1}^{\infty}a^2(t)<\infty$. The nonnegative projection boundaries $b$ and $c$ are such that  $\lambda^* \in (b,c)$ where $\lambda^*$ is defined in Assumption~\ref{assumption:existence-of-optimal-lambda}. 
\end{enumerate}
    }
}

%
%
%
%
%

{\em We next make two observations on the GL algorithm:}
\begin{itemize}
\item If $||B(t-1)||_1$ is more than $\bar{N}$, then $\lambda(t)$ is increased with the hope that this will reduce the number of active sensors in subsequent time slots, as suggested by Lemma~\ref{lemma:active-sensors-decreasing-in-lambda-under-basic-gibbs-sampling}.
\item The $B(t)$ and $\lambda(t)$ processes run on two different timescales; $B(t)$ runs in the faster timescale whereas $\lambda(t)$ runs in a slower timescale. This can be understood from the fact that the stepsize in the $\lambda(t)$ update process decreases with time $t$. Here the faster timescale iterate will view the slower timescale iterate as quasi-static, while the slower timescale iterate will view the faster timescale as almost equilibriated. This is reminiscent of  two-timescale stochastic approximation (see \cite[Chapter~$6$]{borkar08stochastic-approximation-book}).
\end{itemize}

 Let $\pi_{\beta| \lambda^*}(\cdot)$ denote  $\pi_{\beta}(\cdot)$ under $\lambda=\lambda^*$.

\begin{theorem}\label{theorem:optimality-of-the-learning-algorithm-for-constrained-problem}
Under GL  and Assumption~\ref{assumption:existence-of-optimal-lambda}, we have $\lambda(t) \rightarrow \lambda^*$ almost surely, and the limiting distribution of $\{B(t)\}_{t \geq 0}$ is $\pi_{\beta| \lambda^*}(\cdot)$.
\end{theorem}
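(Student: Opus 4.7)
The plan is to cast the GL recursion as a two-timescale stochastic approximation with Markov noise and to identify a limiting ODE whose unique equilibrium is $\lambda^{*}$. On the fast timescale, with $\lambda$ held fixed, the $B(t)$-iterates form an ergodic time-homogeneous Markov chain on the finite state space $\mathcal{B}$ with stationary distribution $\pi_{\beta\mid\lambda}(\cdot)$ by Theorem~\ref{theorem:convergence-basic-Gibbs-sampling}. On the slow timescale, $\lambda(t)$ is updated with stepsizes $a(t)$ satisfying the usual Robbins--Monro conditions. Because $a(t)\downarrow 0$, the $\lambda$-iterate varies arbitrarily slowly compared to the Gibbs chain, so the standard two-timescale philosophy applies: the fast process sees $\lambda$ as quasi-static and equilibrates to $\pi_{\beta\mid\lambda}$, while the slow process sees the fast process as having already equilibrated.

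First I would formally rewrite the $\lambda$-update as
$\lambda(t+1)=[\lambda(t)+a(t)(g(\lambda(t))+M(t+1)+\varepsilon(t))]_{b}^{c}$,
where $g(\lambda):=\mathbb{E}_{\pi_{\beta\mid\lambda}}\|B\|_{1}-\bar{N}$ is the drift obtained by averaging the update direction under the current stationary distribution, $M(t+1)$ is a martingale-difference noise, and $\varepsilon(t)$ is a Markov-noise term that captures the discrepancy between the current $\|B(t-1)\|_{1}$ and its expected value under $\pi_{\beta\mid\lambda(t)}$. Since $\mathcal{B}$ is finite, $\|B\|_{1}\le N$, so $M(t+1)$ has uniformly bounded second moments. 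To handle $\varepsilon(t)$, I would invoke the Poisson-equation/Markov-noise framework from \cite[Chapter~6]{borkar08stochastic-approximation-book}: the Gibbs transition kernel $P_{\lambda}$ is uniformly geometrically ergodic on the finite state space $\mathcal{B}$ as $\lambda$ ranges over the compact interval $[b,c]$, and the solution of the associated Poisson equation depends Lipschitz continuously on $\lambda$. This, together with $a(t)\to 0$ and $\sum a(t)^{2}<\infty$, forces $\sum a(t)\varepsilon(t)$ to be finite a.s.

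The limiting projected ODE is then $\dot{\lambda}=g(\lambda)$ on $[b,c]$ with reflection at the boundaries. By Lemma~\ref{lemma:active-sensors-decreasing-in-lambda-under-basic-gibbs-sampling}, $\lambda\mapsto\mathbb{E}_{\pi_{\beta\mid\lambda}}\|B\|_{1}$ is Lipschitz continuous and strictly decreasing, so $g$ is Lipschitz and strictly decreasing as well. By Assumption~\ref{assumption:existence-of-optimal-lambda}, $g(\lambda^{*})=0$ with $\lambda^{*}\in(b,c)$, and strict monotonicity makes $\lambda^{*}$ the unique zero of $g$ and hence the globally asymptotically stable equilibrium of the ODE (the projection is inactive in a neighborhood of $\lambda^{*}$). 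Invoking the convergence theorem for stochastic approximation with Markov noise and projection \cite[Chapter~6]{borkar08stochastic-approximation-book}, I conclude that $\lambda(t)\to\lambda^{*}$ almost surely.

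Finally, to identify the limiting distribution of $\{B(t)\}_{t\ge 0}$, I would use the fact that the family $\{P_{\lambda}\}_{\lambda\in[b,c]}$ of Gibbs kernels depends continuously on $\lambda$ (each entry is a smooth function of $\lambda$ via the softmax in step~2 of GL), and so does $\pi_{\beta\mid\lambda}$. Given $\lambda(t)\to\lambda^{*}$ a.s., a standard continuity/coupling argument (again from the Markov-noise SA literature) shows that for every bounded $\phi:\mathcal{B}\to\mathbb{R}$, $\mathbb{E}\phi(B(t))\to\sum_{B}\pi_{\beta\mid\lambda^{*}}(B)\phi(B)$, which is exactly the claimed limiting distribution. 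The main obstacle I anticipate is the uniform ergodicity/Poisson-equation bookkeeping needed to absorb the Markov noise $\varepsilon(t)$ into the ODE analysis; once the Gibbs kernel is shown to be uniformly contracting in total variation over $\lambda\in[b,c]$ (which follows from the explicit Dobrushin bound $\delta(P_{\lambda})\le 1-e^{-\beta N\Delta_{\lambda}}/N^{N}$ used in Section~\ref{subsection:convergence-rate-BG}, with $\Delta_{\lambda}$ bounded uniformly on $[b,c]$), the rest reduces to a textbook application of two-timescale stochastic approximation.
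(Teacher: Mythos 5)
Your proposal is correct and follows essentially the same route as the paper's proof: both rewrite the $\lambda$-update as averaged drift $\mathbb{E}_{\pi_{\beta\mid\lambda}}\|B\|_1-\bar{N}$ plus martingale noise plus a vanishing discrepancy term, invoke Lemma~\ref{lemma:active-sensors-decreasing-in-lambda-under-basic-gibbs-sampling} for Lipschitz continuity and monotonicity of the drift, apply projected stochastic-approximation theory from \cite{borkar08stochastic-approximation-book} to get $\lambda(t)\to\lambda^*$, and conclude the limiting distribution via continuity of $\pi_{\beta\mid\lambda}$ in $\lambda$. Your treatment of the Markov-noise term via uniform ergodicity and the Poisson equation is somewhat more explicit than the paper's, which simply asserts $d_V(\pi^{(t)},\pi_{\beta\mid\lambda(t)})\to 0$ from $a(t)\to 0$, but the underlying argument is the same.
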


\begin{proof}
See Appendix~\ref{appendix:proof-of-optimality-of-the-learning-algorithm-for-constrained-problem}.
\end{proof}

Theorem~\ref{theorem:optimality-of-the-learning-algorithm-for-constrained-problem} says that GL   produces a configuration from the distribution $\pi_{\beta| \lambda^*}(\cdot)$ under steady state. Hence, GL meets the sensor activation constraint in \eqref{eqn:centralized-constrained-problem} with equality and offers a near-optimal time-average mean squared error for the constrained problem; the gap from the optimal  
MSE can be made arbitrarily small by choosing $\beta$ large enough.

\subsection{A hard constraint on the number of activated sensors}\label{subsection:hard-constraint}
Let us consider the following modified constrained problem (recall notation from \eqref{eqn:centralized-unconstrained-problem-with-f-and-h}):
\begin{align}\label{eqn:constrained-optimization-problem-static-data-parametric-distribution}
\min_{B \in \mathcal{B}} f(B) \textbf{ s.t. } ||B||_1 \leq \bar{N} 
\tag{P4}
\end{align}
It is easy to see that \eqref{eqn:constrained-optimization-problem-static-data-parametric-distribution} can be easily solved using similar Gibbs sampling algorithms as in Section~\ref{section:gibbs-sampling-unconstrained-problem}, where the Gibbs sampling algorithm runs only on the set of configurations which activate $\bar{N}$ number of sensors. 
Thus, as a by-product, we have also proposed a methodology for the problem in \cite{wang-etal16efficient-observation-selection}, though our framework is more general than   \cite{wang-etal16efficient-observation-selection}. 

Note that, the constraint in \eqref{eqn:centralized-constrained-problem} is weaker than  \eqref{eqn:constrained-optimization-problem-static-data-parametric-distribution}. Also, if  we choose $\beta$ very large, then the number of sensors activated by GL will have very small variance. This allows us to meet the constraint in \eqref{eqn:constrained-optimization-problem-static-data-parametric-distribution} with high probability.

\section{IID   process with parametric distribution: Unknown $\theta_0$}
\label{section:iid-data}
In Section~\ref{section:gibbs-sampling-unconstrained-problem}, we described algorithms for centralized tracking of an i.i.d. process $\{X(t)\}_{t \geq 0}$ with known distributions. In this section, we will deal with the centralized tracking of an i.i.d. process $\{X(t)\}_{t \geq 0}$ where $X(t) \sim p_{\theta_0}(\cdot)$ with an unknown parameter  $\theta_0 \in \Theta$; in this case, $\theta_0$ has to be learnt over time through observations.

The algorithm described in this section will be an adaptation of the GL algorithm discussed in Section~\ref{subsection:gibbs-stochastic-approximation-unconstrained-problem}. However, when $\theta_0$ is unknown, we have to update  its estimate $\theta(t)$  over time using the sensor observations. In order to solve the constrained problem \eqref{eqn:centralized-constrained-problem}, we still need to update $\lambda(t)$ over time so as to attain the optimal $\lambda^*$ of Theorem~\ref{theorem:relation-between-constrained-and-unconstrained-problems} iteratively. However, $f(B)$ (MSE under configuration $B$) in \eqref{eqn:centralized-unconstrained-problem-with-f-and-h} is unknown since $\theta_0$ is unknown, and its estimate $f^{(t)}(B)$ has to be learnt over time using the sensor observations; as a result $h^{(t)}(B):=f^{(t)}(B)+\lambda(t) ||B||_1$ is also iteratively updated for all $B \in \mathcal{B}$.  Hence, we combine the Gibbs sampling algorithm with update schemes for $f^{(t)}(B)$, $\lambda(t)$ and $\theta(t)$  using multi-timescale  stochastic approximation (see \cite{borkar08stochastic-approximation-book}). The Gibbs sampling runs in the fastest timescale and the $\theta(t)$ update runs in the slowest timescale. 

Since the algorithm has several steps (such as Gibbs sampling, $\theta(t)$ update, $f^{(t)}(B)$ update and $\lambda(t)$ update) and each of these steps needs detailed explanation,  we first describe in details some key features and steps of the algorithm in Section~\ref{subsection:step-size-iid-centralized}, Section~\ref{subsection:gibbs-sampling-step-iid-centralized}, Section~\ref{subsection:theta-update}, Section~\ref{subsection:lambda-update-in-GPL} and Section~\ref{subsection:MSE-update-in-GPL}, and then provide a brief summary of the   algorithm in Section~\ref{subsection:GPL-algorithm}.

 The proposed Gibbs Parameter Learning (GPL) algorithm also requires a sufficiently large positive number $A_0$ and a large integer $T$ as input. The need of these parameters  will be clear as we proceed through the algorithm description. 

Let $\mathcal{J}(t)$ denote the indicator that time $t$ is an integer multiple of $T$.  Define $\nu(t):=\sum_{\tau=0}^t \mathcal{J}(\tau)$ to be the number of time slots till time $t$ when all sensors are activated. 
\subsection{Step size sequences}\label{subsection:step-size-iid-centralized}
 For the stochastic approximation updates of $f^{(t)}(B)$, $\lambda(t)$ and $\theta(t)$, the algorithm uses four nonnegative sequences   $\{a(t)\}_{t \geq 0}$, $\{b(t)\}_{t \geq 0}$, $\{c(t)\}_{t \geq 0}$, $\{d(t)\}_{t \geq 0}$. Let $\{s(t)\}_{t \geq 0}$ be a generic sequence where $s \in \{a,b,c\}$. Then we require the following two conditions:
(i) $\sum_{t=0}^{\infty}s(t)= \infty ,$ and 
(ii) $\sum_{t=0}^{\infty}s^2(t)<\infty,   $.

In addition, we have the following specific assumptions:\\
(iii) $\lim_{t \rightarrow \infty} d(t)=0$, \\
(iv) $\sum_{t=0}^{\infty}\frac{c^2(t)}{d^2(t)}<\infty$,\\
(v) $\lim_{t \rightarrow \infty}\frac{b(t)}{a(t)}=\lim_{t \rightarrow \infty}\frac{c(\lfloor \frac{t}{T} \rfloor )}{b(t)}=0$.

Let us recall from the GL algorithm that we had one stochastic approximation update for  $\lambda(t)$ and a single stepsize sequence $\{a(t)\}_{t \geq 0}$.  However, in the GPL algorithm, we will have three stochastic approximation updates and hence there are three step size sequences $a(t)$, $b(t)$ and $c(t)$ in the sequel; the stepsize $d(t)$ is used in estimate update $\theta(t)$. 

Conditions (i) and (ii) are standard requirements for stochastic approximation step sizes. Conditions (iii) and (iv) are additional requirements for the $\theta(t)$ update scheme  described later. Condition~(v)  ensures that the three update equations run on separate timescales. The stepsize  $c(t)$ corresponds to the slowest timescale and $a(t)$ corresponds to the fastest timescale among these step size sequences; this can be understood from the fact that any iteration involving $c(t)$ as the stepsize will vary very slowly, and the iteration involving $a(t)$  will vary fast due to the large step sizes. In condition~(v), we have $c(\lfloor \frac{t}{T} \rfloor )$ instead of $c(t)$ because $\theta(t)$ in our proposed GPL algorithm will be updated only once in every $T$ time slots, and hence a step size $c(\lfloor \frac{t}{T} \rfloor )$  will be used to update $\theta(t)$ by using observations from all sensors whenever $\mathcal{J}(t)=1$.

\subsection{Gibbs sampling step for sensor subset selection}\label{subsection:gibbs-sampling-step-iid-centralized}
The algorithm also maintains a running estimate $h^{(t)}(B)$ of $h(B)$ for all $B \in \mathcal{B}$. 
At time~$t$, it selects a random sensor $j_t \in \mathcal{N}$   uniformly and independently, and sets  $B_{j_t}(t)=1$ with probability 
$p(t):=\frac{ e^{-\beta h^{(t)}(B_{-j_t}(t-1),1)}}{e^{-\beta h^{(t)}(B_{-j_t}(t-1),1)}+e^{-\beta h^{(t)}(B_{-j_t}(t-1),0)}}$ and  $B_{j_t}(t)=0$ with probability 
$(1-p(t))$. For $k  \neq j_t$, it sets $B_k(t)=B_k(t-1)$. \footnote{This randomization operation can even be repeated multiple times in each time slots to achieve faster convergence results.} The sensors are activated according to   $B(t)$, and the observations  $Z_{B(t)}(t):=\{z_k(t):B_k(t)=1\}$ are collected. Then the algorithm declares $\hat{X}(t)=\mu_2(\mathcal{H}_c(t))$.

\subsection{Parameter estimate update $\theta(t)$}\label{subsection:theta-update}

If $\mathcal{J}(t)=1$, the fusion center reads all sensors and obtains $Z(t)$. This is required primarily because we seek to update $\theta(t)$ iteratively and reach a local maximum of the function:
$$g(\theta)= \mathbb{E}_{X(t) \sim p_{\theta_0}(\cdot), B(t)=[1,1,\cdots,1]}\log p (Z(t) | \theta)$$
$g(\theta)$ is the expected log-likelihood of $Z(t)$ given $\theta$, when all sensors are active and $X(t) \sim p_{\theta_0}(\cdot)$. Note that, maximizing $g(\theta)$ minimizes the KL divergence $D(p(Z(t)|\theta_0)||p(Z(t)|\theta))$.   If we use only the sensor observations corresponding to the activation vector $B(t)$ obtained from Gibbs sampling, then the estimate will be biased by the reading of the sensors which are chosen more frequently by  Gibbs sampling.  However, we will later see that the additional amount of sensing and communication can be made arbitrarily small by choosing $T$ large enough.

Since we seek to reach a local maximum of $g(\theta)= \mathbb{E}_{X(t) \sim p_{\theta_0}(\cdot), B(t)=[1,1,\cdots,1]}\log p (Z(t) | \theta)$, a gradient ascent scheme is employed. The gradient of $g(\theta)$ along any coordinate can be computed by perturbing $\theta$ in two opposite directions along that coordinate and evaluating the difference of  $g(\cdot)$ at those two perturbed values.  However, if $\theta_0$ is high-dimensional, then estimating this gradient along all coordinates is computationally intensive. Moreover, evaluating $g(\theta)$ for any $\theta$ requires us to compute an expectation over the distribution $p_{\theta_0}(\cdot)$ and the distribution of the observation noise at all sensors, which might also be expensive. Hence, we perform a noisy gradient estimation for   $g(\theta)$ by simultaneous perturbation stochastic approximation (SPSA) as in \cite{spall92original-SPSA}. Our algorithm generates 
$\Delta(t) \in \{1,-1\}^d$ uniformly over all sequences, and perturbs  the current estimate $\theta(t)$ by a random vector $d(\nu(t)) \Delta(t)$  (recall that $\nu(t):=\sum_{\tau=0}^t \mathcal{J}(\tau)$) in two opposite directions to obtain $\theta(t)+d(\nu(t))\Delta(t)$ and $\theta(t)-d(\nu(t))\Delta(t)$, and estimates each component of the gradient from the following difference:
$$\log p(Z(t)|  \theta(t)+d(\nu(t)) \Delta(t)  ) -\log p(Z(t)|  \theta(t)-d(\nu(t)) \Delta(t)  ) $$ 
This estimate is noisy because (i) $Z(t)$ and $\Delta(t)$ are random, and (ii) $d(\nu(t))>0$ while ideally it should be infinitesimally small.   

The $k$-th component of $\theta(t)$ is updated as follows:

\begin{eqnarray}\label{eqn:theta-update-iid-data}
&& \theta_k(t+1) \nonumber\\
&=& \bigg[ \theta_k(t)+c(\nu(t)) \mathcal{J}(t)  \bigg( \frac{  \log p(Z(t)|  \theta(t)+d(\nu(t)) \Delta(t)  )   }{2 d(\nu(t)) \Delta_k(t)} \nonumber\\
&& - \frac{  \log p(Z(t)|   \theta(t)-d(\nu(t)) \Delta(t)  )   }{2 d(\nu(t)) \Delta_k(t)} \bigg)  \bigg]_{\Theta}
\end{eqnarray}
The iterates are projected onto the compact set $\Theta$ to ensure boundedness. 

Note that, \eqref{eqn:theta-update-iid-data} is a stochastic gradient ascent iteration  performed once in every $T$ slots with step size $c(\nu(t))$. On the other hand, the sequence $\{d(t)\}_{t \geq 0}$ is the perturbation sequence used in gradient estimation, and hence it need not behave like a standard stochastic approximation step size sequence. However, $d(t)$ should converge to $0$ as $t \rightarrow \infty$, in order to ensure that the gradient estimate is asymptotically unbiased. The conditions (iii) and (iv) in Section~\ref{subsection:step-size-iid-centralized} are technical conditions required for the   convergence of \eqref{eqn:theta-update-iid-data}.

\subsection{Weight update $\lambda(t)$}\label{subsection:lambda-update-in-GPL}
$\lambda(t)$ is updated as follows:
\begin{eqnarray}\label{eqn:lambda-update-iid-data}
\lambda(t+1)=[\lambda(t)+b(t) (||B(t)||_1-\bar{N})]_0^{A_0}.
\end{eqnarray}
The intuition here is that, if $||B(t)||_1>\bar{N}$, the sensor activation cost $\lambda(t)$ needs to be increased to prohibit activating large number of sensors in future; this is motivated by Lemma~\ref{lemma:active-sensors-decreasing-in-lambda-under-basic-gibbs-sampling} and is similar to the $\lambda(t)$ update equation in GL algorithm. The goal is to converge to $\lambda^*$ as defined in Theorem~\ref{theorem:relation-between-constrained-and-unconstrained-problems}.

\subsection{MSE estimate update $f^{(t)}(B)$}\label{subsection:MSE-update-in-GPL}
Since $p_{\theta_0}(\cdot)$ is not known initially, the true value of $f(B)$ (i.e., the MSE under configuration $B$ and  known distribution of $X(t)$)   is not known;  hence, the proposed GPL algorithm updates an estimate $f^{(t)}(B)$ using the sensor observations. 
If $\mathcal{J}(t)=1$, the fusion center obtains $Z(t)$ by reading all sensors. The goal is to obtain a  random sample  $Y_B(t)$ of the MSE under a configuration $B$, by using these observations, and update $f^{(t)}(B)$ using $Y_B(t)$. 

However, since $\theta_0$ is unknown and only $\theta(t)$ is available, as an alternative to the MSE under configuration $B$, the fusion center uses the trace of the conditional covariance matrix of $X(t)$ given $Z_B(t)$,  assuming that $X(t) \sim p(\cdot|\theta(t),Z_B(t))$. Hence, we define a random variable:   
\begin{equation}\label{eqn:Y_B-definition}
Y_B(t):= \mathbb{E}_{X(t) }(||X(t)-\hat{X}_B(t)||^2|Z_B(t), \theta(t))
\end{equation}
for each $B \in \mathcal{B}$, 
where    $\hat{X}_B(t)$ is the MMSE estimate declared by $\mu_2$ for configuration $B$ and given     the observation $Z_B(t)$ made by active sensors determined by $B$, under the assumption that $X(t) \sim p_{\theta(t)}(\cdot)$. Clearly, $Y_B(t)$ is a random variable with the randomness coming from two sources: (i) randomness of $\theta(t)$, and (ii) randomness of $Z_B(t)$ which has a distribution $p(Z_B(t)|\theta_0)$ since the original $X(t)$ process that yields $Z_B(t)$ has a distribution $p_{\theta_0}(\cdot)$. Computation of $Y_B(t)$ is simple for Gaussian $X(t)$ and the MMSE estimator, since closed form expressions are available for  $Y_B(t)$. In case the  computation of $Y_B(t)$ is   expensive (since it requires us to evaluate an expectation over the conditional distribution of $X(t)$ given $Z_B(t)$ and $\theta(t)$), one can obtain an unbiased estimate of $Y_B(t)$ by drawing a random sample of $X(t)$ from the distribution $p_{\theta(t)}(\cdot)$, and scaling the sample squared error $||X(t)-\hat{X}_B(t)||^2$ by the ratio of the two distributions $p(\cdot|\theta(t),Z_B(t))$  and $p_{\theta(t)}(\cdot)$; this technique is basically  importance sampling (see \cite{srinivasan2013importance}) with only one sample.

Using $Y_B(t)$, the following   update is made for all $B \in \mathcal{B}$:
\begin{eqnarray} \label{eqn:fB-update-iid-data}
f^{(t+1)}(B)= [f^{(t)}(B)+\mathcal{J}(t)  a(\nu(t))   (Y_B(t) - f^{(t)}(B)) ]_0^{A_0}
\end{eqnarray}
The iterates are projected onto  a compact interval $[0,A_0]$ to ensure boundedness. The goal here is that, if  $\theta(t) \rightarrow \theta^*$, then $f^{(t)}(B)$ will converge to $\mathbb{E}_{Z_B(t) \sim p(\cdot| \theta_0)} \mathbb{E}_{X(t) \sim p(\cdot|\theta^*,Z_B(t))}(||X(t)-\hat{X}_B(t)||^2|Z_B(t), \theta^*) $, which is equal to  $f(B)$ under $\theta^*=\theta_0$.

{\em We will later argue that this occasional $O(2^N)$ computation for all $B \in \mathcal{B}$ can be avoided, but   convergence   will be slow.}

\subsection{The GPL algorithm}\label{subsection:GPL-algorithm} A summary of all the steps of the GPL algorithm   is provided   below.   {\em We will show in Theorem~\ref{theorem:convergence-of-GLEM} the almost sure convergence of this algorithm.

\noindent\fbox{
    \parbox{0.47 \textwidth}{
{\bf GPL algorithm:}     
 Initialize all iterates arbitrarily.\\
  For any  time  $t=0,1,2,\cdots$, do the following:\\
\begin{enumerate}
\item Sensor activation and process estimation: Perform the Gibbs sampling step as in Section~\ref{subsection:gibbs-sampling-step-iid-centralized} and obtain the activation vector $B(t)$. Activate the sensors $\{k \in {1,2,\cdots,N}: B_k(t)=1\}$ and  obtain the corresponding observations $Z_{B(t)}(t)$ at the fusion center. Estimate $\hat{X}(t)$ using the observations $Z_{B(t)}(t)$ and the current parameter estimate $\theta(t)$. If $\mathcal{J}(t)=1$, read  all sensors and obtain $Z(t)$. Compute or estimate $Y_B(t)$ (defined in \eqref{eqn:Y_B-definition})   for all $B \in \mathcal{B}$.

\item  $f^{(t)}(B)$ update: If $\mathcal{J}(t)=1$, update $f^{(t)}(B)$ for all $B \in \mathcal{B}$ using \eqref{eqn:fB-update-iid-data} and also update $h^{(t)}(B)=f^{(t)}(B)+\lambda(t) ||B||_1$ for all $B \in \mathcal{B}$. 

\item  $\theta(t)$ update: If $\mathcal{J}(t)=1$,  update $\theta(t)$ using \eqref{eqn:theta-update-iid-data}. If $\mathcal{J}(t)=0$, then $\theta(t+1)=\theta(t)$.

\item   $\lambda(t)$ update: Update $\lambda(t)$ according to \eqref{eqn:lambda-update-iid-data}.
\end{enumerate}
    }
}

%
%
%
%
%

Note that, in the GPL algorithm, it is sufficient to consider  $\mathcal{H}_p(t)=\{\lambda(t);B(t-1); f^{(t)}(B) \forall B \in \mathcal{B} \}$, and $\mathcal{H}_c(t)=\{B(t); Z(t); \theta(t)\}$.  The Gibbs sampling step tries to minimize a running estimate $h^{(t)}(\cdot)$ of the unconstrained cost function over the space of configurations $\mathcal{B}$.

  Multiple timescales: {\em GPL has multiple iterations running in multiple timescales (see \cite[Chapter~$6$]{borkar08stochastic-approximation-book}). The $\{B(t)\}_{t \geq 0}$ process runs ar the fastest timescale, whereas the $\{ \theta(t)\}_{t \geq 0}$ update scheme runs at the slowest timescale. The basic idea is that a faster timescale iterate views a slower timescale iterate as quasi-static, whereas a slower timescale iterate views a faster timescale iterate as almost equilibriated. For example, since $\lim_{t \rightarrow \infty}\frac{c(t)}{a(t)}=0$, the $\theta(t)$ iterates will vary very slowly compared to $f^{(t)}(B)$ iterates; as a result, $f^{(t)}(B)$ iterates will view quasi-static $\theta(t)$. In other words, the iterates will behave as if a slower timescale iterate varies in a slow outer loop, and a faster timescale iterate varies in an inner loop.}

\subsection{Complexity of GPL and reducing the complexity}
{\underline{Sampling and communication complexity:} }
{\em Since all   sensors are activated when $\mathcal{J}(t)=1$, the mean number of additional active sensors   per unit time is $O(\frac{N}{T})$; these additional $O(\frac{N}{T})$ observations  need to be communicated to the fusion center. However, the additional $O(\frac{N}{T})$ sensing  can be made large enough by choosing a very large $T$.}

\underline{Computational complexity:}  
{\em The computation of $Y_B(t)$ in \eqref{eqn:fB-update-iid-data} for all $B \in \mathcal{B}$ requires $O(2^N)$ expectation computations whenever $\mathcal{J}(t)=1$. However, if one chooses large  $T$ (e.g., $O(4^N)$), then this additional  computation per unit time will be small. However, if one wants to avoid that computation also, then, when $\mathcal{J}(t)=1$, one can simply compute $Y_{B(t)}(t)$ and update $f^{(t)}(B(t))$ instead of doing it for all configurations $B \in \mathcal{B}$. However,  the stepsize sequence $a(\nu(t))$ cannot be used; instead, a stepsize $a(\nu_B(t))$ has to be used when $B(t)=B$ and $f^{(t)}(B)$ is updated using \eqref{eqn:fB-update-iid-data}, where $\nu_B(t):=\sum_{\tau=0}^t \mathcal{J}(\tau) \mathbb{I}(B(\tau)=B)$.  In this case, the convergence result (Theorem~\ref{theorem:convergence-of-GLEM}) on GPL will still hold; however, the proof will require a technical condition $\lim \inf_{t \rightarrow \infty} \frac{\nu_B(t)}{t}>0$ almost surely for all $B \in \mathcal{B}$, which will be satisfied by the Gibbs sampler using finite $\beta$ and bounded $h^{(t)}(B)$. However, we discuss only \eqref{eqn:fB-update-iid-data} update in this paper for the sake of simplicity in the convergence proof, since technical details of asynchrounous stochastic approximation required in the variant mentioned in this subsection are not the main theme of this paper.}

{\em 
When $\mathcal{J}(t)=1$, one can avoid computation of $h^{(t+1)}(B)$ for all $B \in \mathcal{B}$  in Step~$2$ of GPL. Instead, the fusion center can update only  $h^{(t)}(B(t))$, $h^{(t)}(B_{-j_t}(t-1),1)$ and $h^{(t)}(B_{-j_t}(t-1),0)$ at time $t$, since only these iterates are  required in the Gibbs sampling.}


\subsection{Convergence of GPL}
{\em We will first list a few assumptions that will be crucial in the convergence proof of GPL.}

\begin{assumption}\label{assumption:Lipschitz-continuity-wrt-theta}
The distribution $p_{\theta}(\cdot)$ and the mapping $\mu_2$   as defined before are Lipschitz continuous in $\theta \in \Theta$.
\end{assumption}

\begin{assumption}\label{assumption:fixed-decoding-strategy-for-iid-data}
$\mu_2$ is known to the fusion center.
\end{assumption}

{\em Assumption~\ref{assumption:fixed-decoding-strategy-for-iid-data} allows us to focus only on the sensor subset selection problem rather than the problem of estimating the process given the sensor observations. }

\begin{assumption}\label{assumption:existence-of-lambda*}
Let us consider $Y_B(t)$ with $\theta(t)=\theta \forall t \in \{0,1,2,\cdots\}$ fixed in GPL. Suppose that, one uses   BG to solve the unconstrained problem \eqref{eqn:centralized-unconstrained-problem} for a given $\lambda$, but with the MSE $||X(t)-\hat{X}(t)||^2$ replaced by $Y_{B(t)}(t)$ (under a fixed $\theta$) in the objective function of \eqref{eqn:centralized-unconstrained-problem}, and then finds the $\lambda^*(\theta)$ as in Theorem~\ref{theorem:relation-between-constrained-and-unconstrained-problems} to meet the constraint $\bar{N}$ with equality. We assume that, 
for the given $\beta$ and $\bar{N}$, and for each $\theta \in \Theta$, there exists $\lambda^*(\theta) \in [0,A_0)$ such that,  the optimal Lagrange multiplier to relax this new unconstrained problem is $\lambda^*(\theta)$ (Theorem~\ref{theorem:relation-between-constrained-and-unconstrained-problems}). Also, $\lambda^*(\theta)$ is Lipschitz continuous in $\theta \in \Theta$.
\end{assumption}

{\em 
Assumption~\ref{assumption:existence-of-lambda*} makes sure that the $\lambda(t)$ iteration \eqref{eqn:lambda-update-iid-data} converges, and the   constraint is met with equality.

Let us define the function $\bar{\Gamma}_{\theta}(\phi):=\lim_{\delta \downarrow 0} \frac{[\theta+\delta \phi]_{\Theta}-\theta}{\delta}$; this function is parameterized by $\theta \in \Theta$, and calculates the gradient of the projection function at $\theta$.
}

\begin{assumption}\label{assumption:finite-number-of-local-maximum}
Consider the  function $g(\theta)= \mathbb{E}_{X(t) \sim p_{\theta_0}(\cdot), B(t)=[1,1,\cdots,1]}\log p (Z(t) | \theta)$; this is the expected conditional log-likelihood function of $Z(t)$ conditioned on $\theta$, given that $X(t) \sim p_{\theta_0}(\cdot)$ and $B(t)=[1,1,\cdots,1]$. We assume    that the ordinary differential equation (ODE) $\dot{\theta}(\tau)=\bar
{\Gamma}_{\theta(\tau)}(\nabla g(\theta(\tau)))$ has a globally  asymptotically stable solution $\theta^*$ in the interior of $\Theta$. Also,  $\nabla g(\theta)$ is Lipschitz continuous in $\theta$.
\end{assumption}

{\em 
One can show that the $\theta(t)$ iteration~\eqref{eqn:theta-update-iid-data} asymptotically tracks the ordinary differential equation (ODE) $\dot{\theta}(\tau)=\nabla g(\theta(\tau))$ inside the interior of $\Theta$. In fact, $\bar{\Gamma}_{\theta(\tau)}(\nabla g(\theta(\tau))=\nabla g(\theta(\tau))$ when $\theta(\tau)$ lies inside the interior of $\Theta$. The globally asymptotically stable equilibrium condition on $\dot{\theta}(\tau)=\bar
{\Gamma}_{\theta(\tau)}( \nabla g(\theta(\tau)))$ is required to make sure that the iteration does not converge to some unwanted point on the boundary of $\Theta$ due to the forced projection. The assumption on $\theta^*$ makes sure that the $\theta(t)$ iteration converges to $\theta^*$. However, if there does not exist such a globally asymptotically stable equilibrium, $\theta(t)$ converges almost surely to the set of stationary points of the ODE.

The following result tells us that   the iterates of GPL  almost surely converge to the desired values.
}

\begin{theorem}\label{theorem:convergence-of-GLEM}
Under Assumptions~\ref{assumption:Lipschitz-continuity-wrt-theta},  \ref{assumption:fixed-decoding-strategy-for-iid-data},  \ref{assumption:existence-of-lambda*},  \ref{assumption:finite-number-of-local-maximum} and the GPL algorithm, we have  $\lim_{t \rightarrow \infty}\theta(t)=\theta^*$ almost surely. Correspondingly, $\lambda(t) \rightarrow \lambda^*(\theta^*)$ almost surely. 
Also, $f^{(t)}(B) \rightarrow\mathbb{E}_{Z_B(t) \sim p(\cdot| \theta_0)} \mathbb{E}_{X(t) \sim p(\cdot|\theta^*,Z_B(t))}(||X(t)-\hat{X}_B(t)||^2|Z_B(t), \theta^*) =:f_{\theta^*}(B)$ almost surely for all $B \in \mathcal{B}$.  The $B(t)$ process reaches the steady-state distribution  $\pi_{\beta,f_{\theta^*},\lambda^*(\theta^*), \theta^*}(\cdot)$ which can be obtained by replacing $h(B)$ in \eqref{eqn:definition-of-Gibbs-distribution} by $f_{\theta^*}(B)+\lambda^*(\theta^*)||B||_1$ where $f_{\theta^*}(B)$ is the MSE under configuration $B$ if the true parameter is $\theta^*$. 

In case there does not exist a globally asymptotically stable equilibrium $\theta^*$, the   $\theta(t)$ iteration almost surely converges to the stationaly points of the ODE $\dot{\theta}(\tau)=\bar
{\Gamma}_{\theta(\tau)}(\nabla g(\theta(\tau)))$.
\end{theorem}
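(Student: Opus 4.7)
The plan is to invoke the multi-timescale stochastic approximation framework of \cite[Chapter~6]{borkar08stochastic-approximation-book}, exploiting the strict separation of the step-size sequences $a(t)$, $b(t)$, $c(\lfloor t/T \rfloor)$. Under this framework, a faster-timescale iterate tracks its ODE with the slower-timescale iterates held quasi-static, while a slower-timescale iterate views the faster-timescale iterates as already equilibrated. I would work from the fastest scale up to the slowest, in four layers.

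First, freezing $\theta(t)\equiv\theta$ and $\lambda(t)\equiv\lambda$, the Gibbs sampler on $B(t)$ (driven by the current $h^{(t)}(\cdot)$) is a time-inhomogeneous Markov chain on a finite state space whose transition probabilities are Lipschitz in $f^{(t)}(\cdot)$ (by Assumption~\ref{assumption:Lipschitz-continuity-wrt-theta} and continuity of the logit map). Following the argument used for the GL algorithm in Theorem~\ref{theorem:optimality-of-the-learning-algorithm-for-constrained-problem}, for any frozen values of the slower iterates the chain equilibriates to the Gibbs distribution $\pi_{\beta,f,\lambda}(\cdot)$ associated with $h(B)=f(B)+\lambda\|B\|_1$. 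Next, for the $f^{(t)}(B)$ iterate with step-size $a$, the ODE limit (for frozen $\theta,\lambda$) is $\dot f(B) = \mathbb{E}[Y_B(t)\mid\theta]-f(B)$, which has the unique globally asymptotically stable equilibrium $f_\theta(B):=\mathbb{E}_{Z_B\sim p(\cdot|\theta_0)}\mathbb{E}_{X\sim p(\cdot|\theta,Z_B)}(\|X-\hat X_B\|^2 \mid Z_B,\theta)$; the projection to $[0,A_0]$ is harmless provided $A_0$ is large enough, and the martingale-difference noise is bounded.

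Second, on the $\lambda$ timescale (step-size $b$), the iterate $f^{(t)}(B)$ has already tracked $f_\theta(\cdot)$ and the Gibbs chain has already equilibriated to $\pi_{\beta,f_\theta,\lambda}(\cdot)$. The $\lambda$ update is a projected Robbins--Monro scheme whose associated ODE is $\dot\lambda=\mathbb{E}_{B\sim\pi_{\beta,f_\theta,\lambda}}\|B\|_1-\bar N$. Lemma~\ref{lemma:active-sensors-decreasing-in-lambda-under-basic-gibbs-sampling} gives strict monotonicity and Lipschitz continuity of the right-hand side in $\lambda$, so by Assumption~\ref{assumption:existence-of-lambda*} the ODE has the unique globally asymptotically stable equilibrium $\lambda^*(\theta)\in(0,A_0)$; hence $\lambda(t)\to\lambda^*(\theta(t))$ along the slow timescale. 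I would invoke Kushner--Clark or \cite[Chapter~6,~Theorem~2]{borkar08stochastic-approximation-book} here for the projected iterate.

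Third, on the slowest timescale (step-size $c$), $\theta(t)$ evolves as a projected SPSA gradient ascent on $g(\theta)$, fed by one fresh all-sensor sample $Z(t)$ at each slot where $\mathcal{J}(t)=1$. The two-sided SPSA estimator yields a gradient estimate of the form $\nabla g(\theta(t))+\text{bias}+\text{noise}$, where the bias is $O(d(\nu(t))^2)$ by a Taylor expansion and the noise is a martingale difference whose per-step variance is $O(1/d(\nu(t))^2)$; conditions (iii) and (iv) on $\{c(t),d(t)\}$ imply $c(t)d(t)^2\to 0$ and $\sum c(t)^2/d(t)^2<\infty$, which are exactly the Spall--Kushner hypotheses for SPSA convergence (see \cite{spall92original-SPSA} and \cite[Chapter~10]{borkar08stochastic-approximation-book}). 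The limiting ODE is $\dot\theta=\bar\Gamma_\theta(\nabla g(\theta))$ by standard projected-ODE arguments. Assumption~\ref{assumption:finite-number-of-local-maximum} then gives $\theta(t)\to\theta^*$ almost surely, and if only stationary points are available the iterate converges almost surely to that set by the Kushner--Clark lemma.

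Finally, concatenating the three layers using the standard two-timescale lemma recursively, $f^{(t)}(B)\to f_{\theta^*}(B)$, $\lambda(t)\to\lambda^*(\theta^*)$, and the $B(t)$ chain equilibriates to $\pi_{\beta,f_{\theta^*},\lambda^*(\theta^*)}(\cdot)$, which is the claimed limiting distribution. The main obstacle I expect is the SPSA layer: one must verify that the estimated gradient, although noisy and biased on each step, aggregates into a valid stochastic gradient in the sense required by Borkar's ODE theorem, and simultaneously that the projection onto $\Theta$ does not create spurious boundary equilibria — this is precisely why Assumption~\ref{assumption:finite-number-of-local-maximum} is stated in terms of $\bar\Gamma_\theta(\nabla g(\theta))$ rather than $\nabla g(\theta)$. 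A secondary technical issue is verifying Lipschitz continuity of $\pi_{\beta,f_\theta,\lambda^*(\theta)}(\cdot)$ in $\theta$, which follows from Assumption~\ref{assumption:Lipschitz-continuity-wrt-theta} combined with Assumption~\ref{assumption:existence-of-lambda*} and the smooth dependence of $Y_B(\cdot)$ on $\theta$.
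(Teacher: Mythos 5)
Your proposal follows essentially the same route as the paper's proof: a fastest-to-slowest multi-timescale stochastic approximation argument in which the Gibbs chain equilibrates to $\pi_{\beta,f,\lambda}(\cdot)$, the $f^{(t)}(B)$ iterates track $f_{\theta(t)}(B)$, the $\lambda(t)$ iterates track $\lambda^*(\theta(t))$ via the monotonicity/Lipschitz property of the mean activation count, and the $\theta(t)$ update is handled as a projected SPSA scheme (Spall plus Borkar's projected-ODE machinery), with the layers then concatenated. The only nuance worth noting is that the paper leans explicitly on the fact that the $\theta(t)$ and $f^{(t)}(B)$ updates are driven by the all-sensor readings at times with $\mathcal{J}(t)=1$ and are therefore \emph{decoupled} from the faster $B(t)$ and $\lambda(t)$ iterations, which your sketch captures implicitly but does not state.
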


\begin{proof}
See Appendix~\ref{appendix:proof-of-GLEM}.
\end{proof}

{\em Now we make a few observations. If $\theta(t) \rightarrow \theta^*$, but the constraint in \eqref{eqn:centralized-constrained-problem} is satisfied  with $\lambda=0$ and  policy $\mu_2(\cdot; \cdot; \theta^*)$,\footnote{The other two arguments of $\mu_2$ are $B(t)$ and $Z_{B(t)}(t)$} then  $\lambda(t) \rightarrow 0$, i.e., $\lambda^*(\theta^*)=0$, and the constraint becomes redundant.   If $\theta^*$ exists, then, under the above assumptions, we will have $\theta^*=\theta_0$, and   the algorithm reaches the global optimum.}

{\em 
If all sensors are not read when $\mathcal{J}(t)=1$, then one has to update $\theta(t)$ based on the observations   $Z_{B(t)}(t)$ collected from the sensors determined by $B(t)$. In that case, $\theta(t)$ will converge to a stationary point $\theta_1$ of $g_1(\theta):=\lim_{t \rightarrow \infty}\mathbb{E}_{X(t) \sim p_{\theta_0}(\cdot),B(t) \sim \pi_{\beta, f_{\theta}, \lambda^*(\theta), \theta}}  \log p(Z_{B(t)}(t)|\theta)$, which will be different from $\theta^*$ of Theorem~\ref{theorem:convergence-of-GLEM} in general. However, in the numerical example   in Section~\ref{section:numerical-results}, we observe numerically that $\theta_1=\theta^*$ can be possible.}

\section{Numerical Results}\label{section:numerical-results}
\subsection{Performance of BG~algorithm}
\label{subsection:numerical-performance-of-basic-gibbs-sampling}

{\em 
For the sake of illustration, we consider $N=10$~sensors which are supposed to sense $X=\{X_1,X_2,\cdots,X_{10}\}$, where 
$X$ is a jointly Gaussian random vector with covariance matrix $M$. Sensor~$k$ has access only to $X_k$. The matrix $M$ is chosen as follows. We generated a random $N \times N$ matrix $A$ whose elements are uniformly and independently distributed over the interval $[-1,1]$, and set $M=A^T A$ as the covariance matrix of $X$. We set sensor activation cost $\lambda=2$, and seek to solve \eqref{eqn:centralized-unconstrained-problem}. We assume that sensing at each node is perfect,  and 
that the fusion center estimates $\hat{X}$ from the observation $\{X_i\}_{i \in S}=:X_S$ as $\mathbb{E}(X | X_S)$, where $S$ is the set of active sensors. Under such an estimation scheme, the conditional distribution of 
$X_{S^c}$ is still a jointly Gaussian random vector with mean $\mathbb{E}(X_{S^c} | X_S)$ and the covariance matrix $M(S^c,S^c)-M(S^c,S) M(S,S)^{-1}M(S,S^c)$ (see \cite[Proposition~$3.4.4$]{hajek-lecture-note}), where $M(S,S^c)$ is the restriction of $M$ to the rows indexed by $S$ and the columns indexed by $S^c$. The trace of this covariance matrix gives the MMSE when the subset $S$ of sensors are active. }

\begin{figure}[!t]
\begin{center}
\includegraphics[height=5cm, width=7cm]{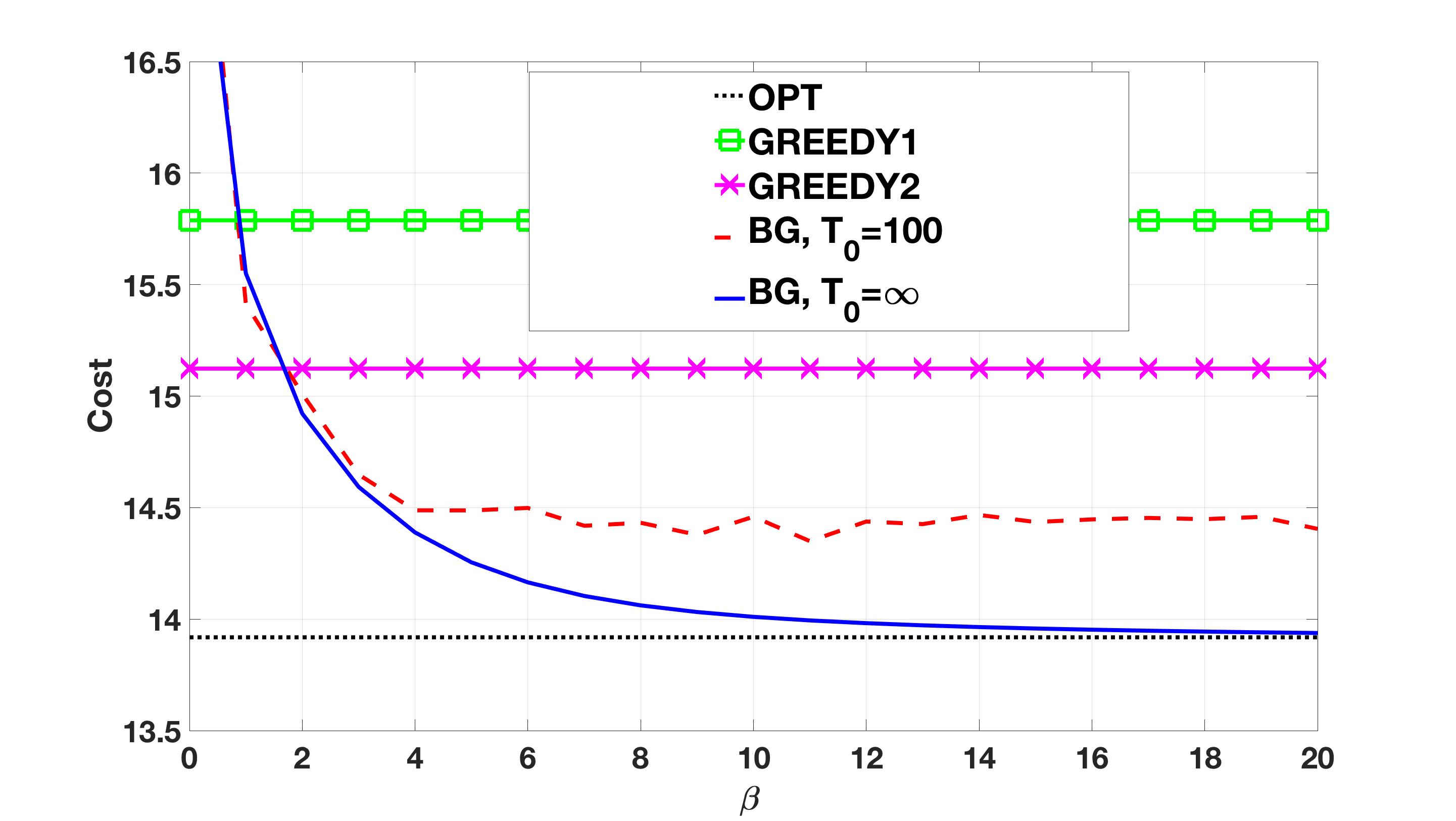}
\end{center}
\caption{Comparison among OPT, BG under steady state ($T_0=\infty$),  BG with finite iterations, GREEDY1 and GREEDY2, for solving problem~\eqref{eqn:centralized-unconstrained-problem}.   For each $\beta$, BG with finite iterations stops after $T_0=100$~iterations. The results of BG with finite iterations are averaged over $100$ independent sample paths. Details are provided in Section~\ref{subsection:numerical-performance-of-basic-gibbs-sampling}.}
\label{fig:comparison--gibbs-optimal-greedy-finitegibbs}
\end{figure}

{\em 
In Figure~\ref{fig:comparison--gibbs-optimal-greedy-finitegibbs}, we compare the cost for    five algorithms:
\begin{itemize}
\item OPT: Here we consider the minimum   cost for \eqref{eqn:centralized-unconstrained-problem}.
\item BG under steady state: Here   the configuration $B \in \mathcal{B}$ is chosen 
according to the distribution $\pi_{\beta}(\cdot)$ defined in Section~\ref{section:gibbs-sampling-unconstrained-problem}, which can be obtained by running BG for $T_0=\infty$ iterations. 
This is done for several values of $\beta$.
\item BG  with finite iteration: Here we run BG~algorithm for $T_0=100$~iterations. This is done independently for several values of $\beta$, where for each $\beta$ the iteration starts from an independent random configuration. Note that, we have simulated $100$ independent sample paths of BG for each $\beta$, and averaged the result over these sample paths. 
\item GREEDY1: Start with an empty set $S$, and find the cost if this subset of sensors are activated. Then compare this cost with the cost in case sensor~$1$ is added to this set. If it turns out that adding sensor~$1$ to this set $S$ reduces the cost, then add sensor~$1$ to the set $S$; otherwise, remove sensor~$1$ from set $S$. Do this operation serially for all sensors, and activate the  sensors given by the final set $S$.
\item GREEDY2: Start with an empty set $S$, and find the cost if this subset of sensors are activated. Then find the sensor~$j_1$ which, when added to $S$, will result in the minimum cost. If the cost for $S \cup \{j_1\}$ is less than that of $S$, then do $S=S \cup \{j_1\}$.  Now find the sensor~$j_2$ which, when added to $S$, will result in the minimum cost. If the cost for $S \cup \{j_2\}$ is less  than that of $S$, then do $S=S \cup \{j_2\}$.  Repeat this operation $N$ times, and activate the set of  sensors given by the final set $S$. This algorithm is adapted from \cite{wang-etal16efficient-observation-selection}.
\end{itemize}
}

\begin{figure}[!t]
\begin{center}
\includegraphics[height=5cm, width=7cm]{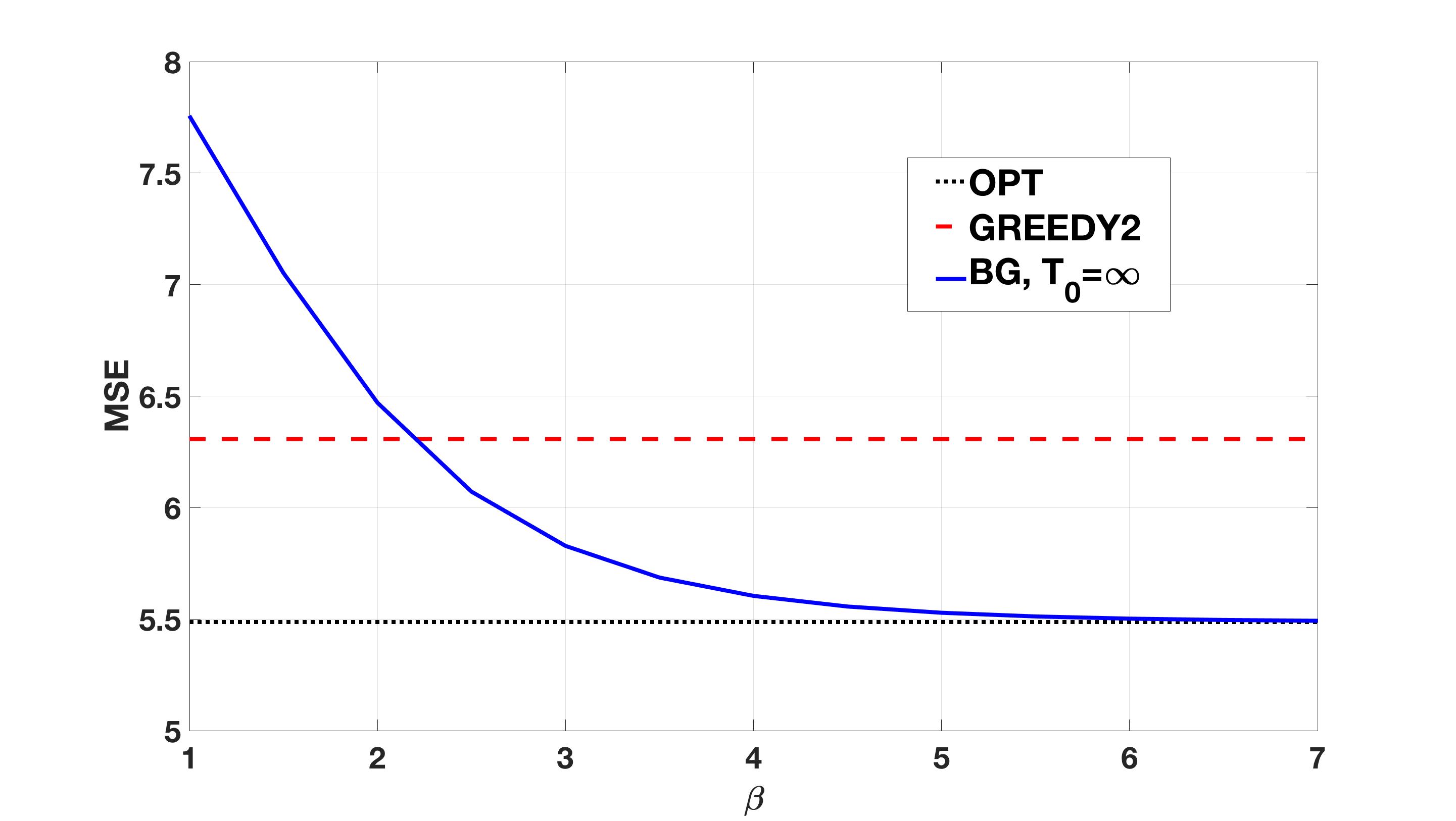}
\end{center}
\vspace{-6mm}
\caption{Comparison among OPT, BG under steady state ($T_0=\infty$), and GREEDY2, for solving problem~\eqref{eqn:constrained-optimization-problem-static-data-parametric-distribution}.   Details are provided in  Section~\ref{subsection:numerical-gibbs-sampling-applied-to-hard-constrained-problem}.}
\label{fig:gibbs_optimal_greedy_comparison_fixed_number_of_active_sensors}
\end{figure}

{\em It turns out that, under the optimal configuration, $5$ sensors are activated and the optimal cost is $13.9184$. GREEDY1 activates $7$~sensors and incurred a cost of $15.7881$. On the other hand, GREEDY2 activates $6$~sensors and incurs a cost of $15.1234
$. However, we are not aware of any monotonicity or supermodularity property of the objective function in \eqref{eqn:centralized-unconstrained-problem}; hence, we cannot provide any constant approximation ratio guarantee for GREEDY1 and GREEDY2 algorithms for the problem~\eqref{eqn:centralized-unconstrained-problem}. On the other hand, we have already proved that BG performs near optimally for large $\beta$. Hence, we choose to investigate the performance of BG, though it might require more number of iterations compared to $N=10$ iterations for GREEDY1 or $O(\frac{N(N-1)}{2})$ iterations for GREEDY2. It is important to note that, \eqref{eqn:centralized-unconstrained-problem} is NP-hard, and BG  allows us to avoid searching over $2^N$ possible configurations. }

{\em In Figure~\ref{fig:comparison--gibbs-optimal-greedy-finitegibbs}, we can see that for $\beta \geq 3$, the steady state distribution $\pi_{\beta}(\cdot)$ of BG achieves better expected cost than GREEDY1 and GREEDY2, and the cost becomes closer to the optimal cost as $\beta$ increases. On the other hand, for each $\beta \geq 5$, BG after $100$~iterations yielded a configuration   that achieves near-optimal cost. Hence, BG with reasonably  small number of iterations can be used to find the optimal subset of active sensors. Note that, in this numerical example, BG with $100$~iterations need to compute the cost for $200$~configurations, while GREEDY1 and GREEDY2 need to compute the cost for $10$ and $45$ configurations respectively; but this additional amount of computation (which is much less that $2^N$) can significantly reduce the cost. However, the real advantage of Gibbs sampling based subset selection over the greedy subset selection is that, when an unknown distribution is learnt over time, the greedy algorithm has to be re-run each time with $O(N)$ or $O(N^2)$ complexity, whereas Gibbs sampling can be run in each slot iteratively with minimal computational cost while achieving near-optimal performance under   large $\beta$.}

\begin{figure}[!t]
\begin{center}
\includegraphics[height=4cm, width=6cm]{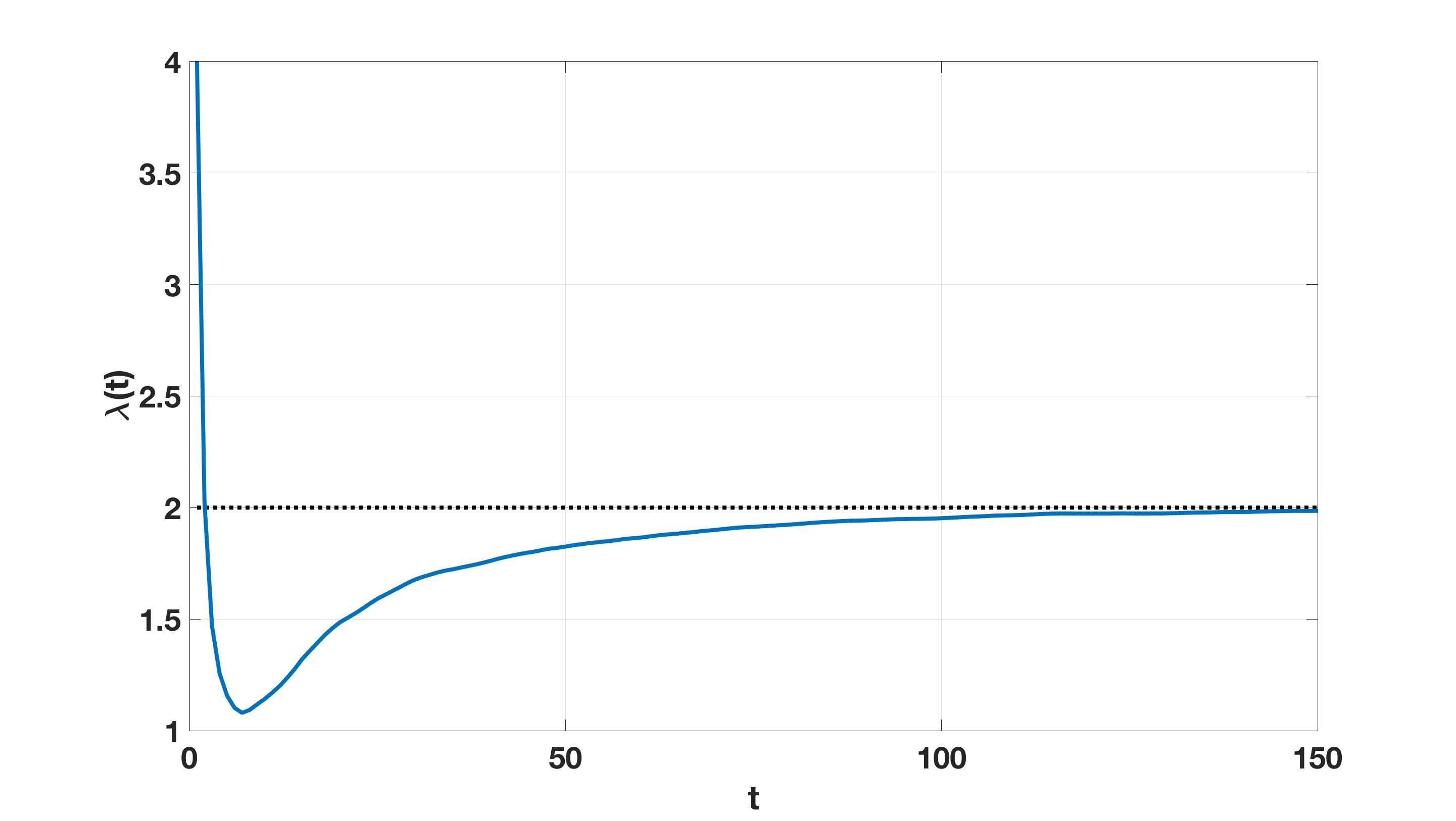}
\end{center}
\vspace{-4mm}
\caption{Illustration for convergence speed of $\lambda(t)$ (averaged over $50$ independent sample paths) in the GL~algorithm.  }
\label{fig:gibbs-stochastic-approximation}
\end{figure}

\begin{figure*}[t]
 \begin{minipage}[r]{0.5\linewidth}
\subfigure{
\includegraphics[height=5cm, width=\linewidth]{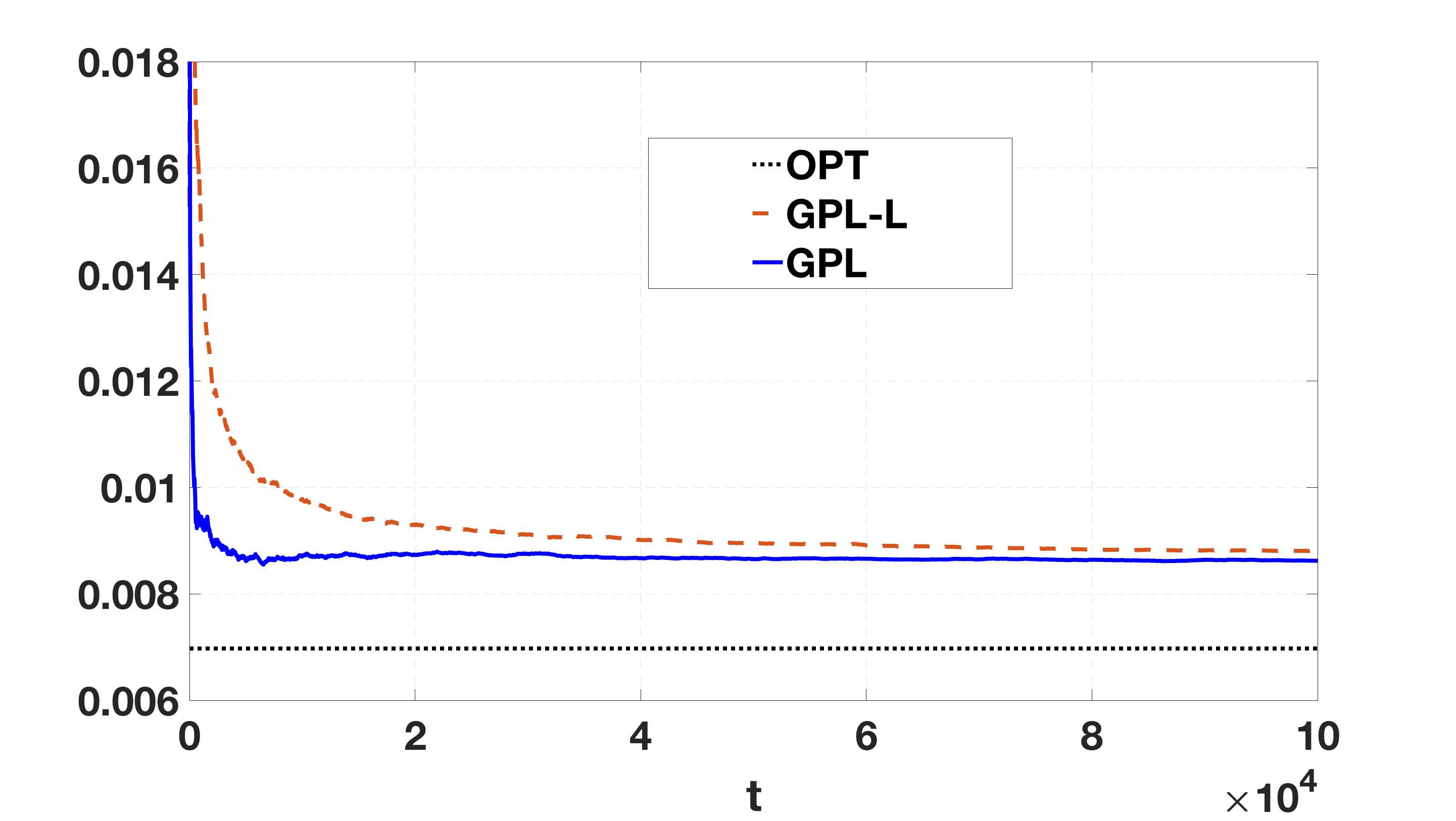}
\includegraphics[height=5cm, width=\linewidth]{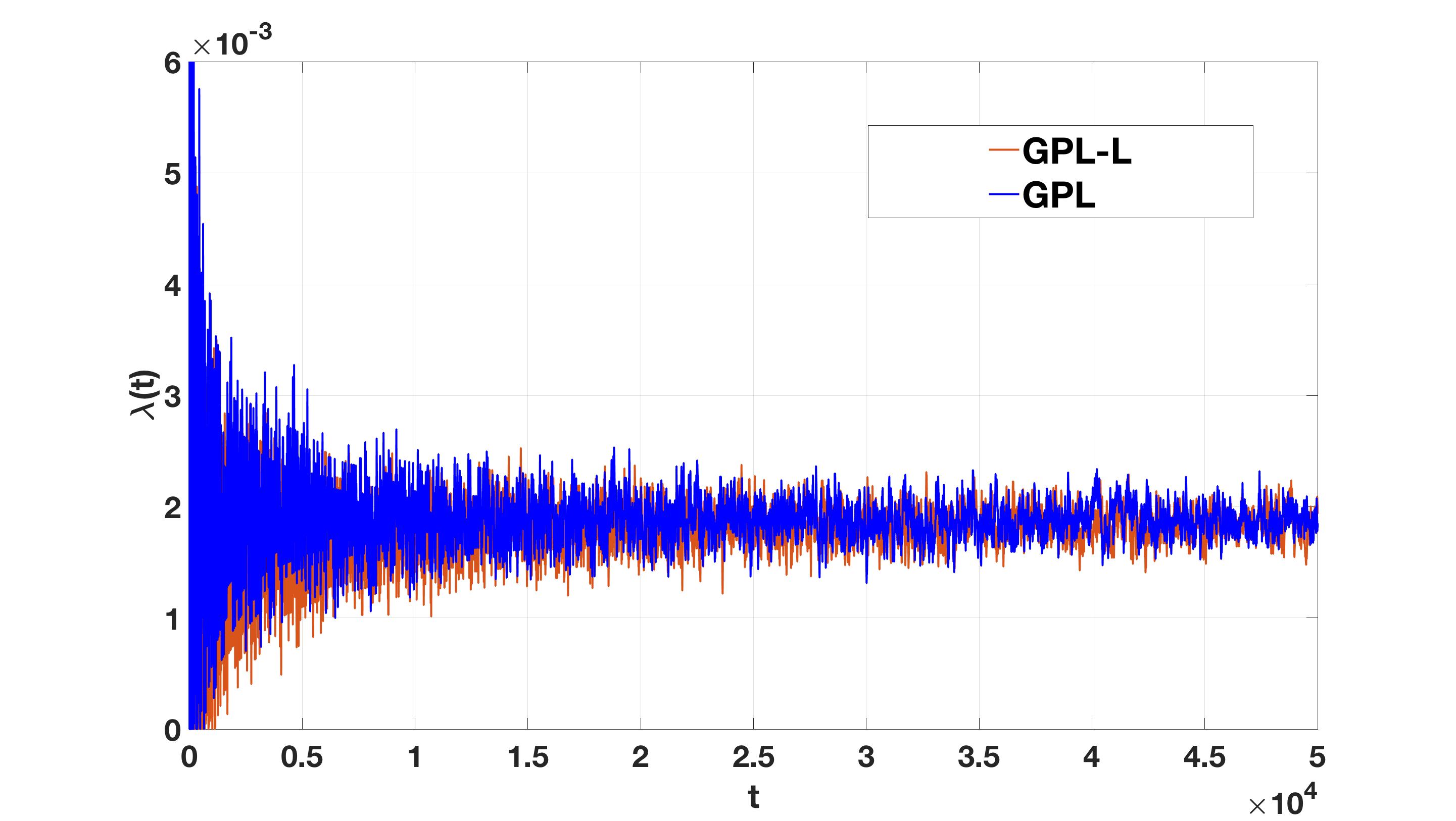}}
\end{minipage}  \hfill
\end{figure*}
 \begin{figure*}[t]
\begin{minipage}[c]{0.5\linewidth}
\subfigure{
\includegraphics[height=5cm, width=\linewidth]{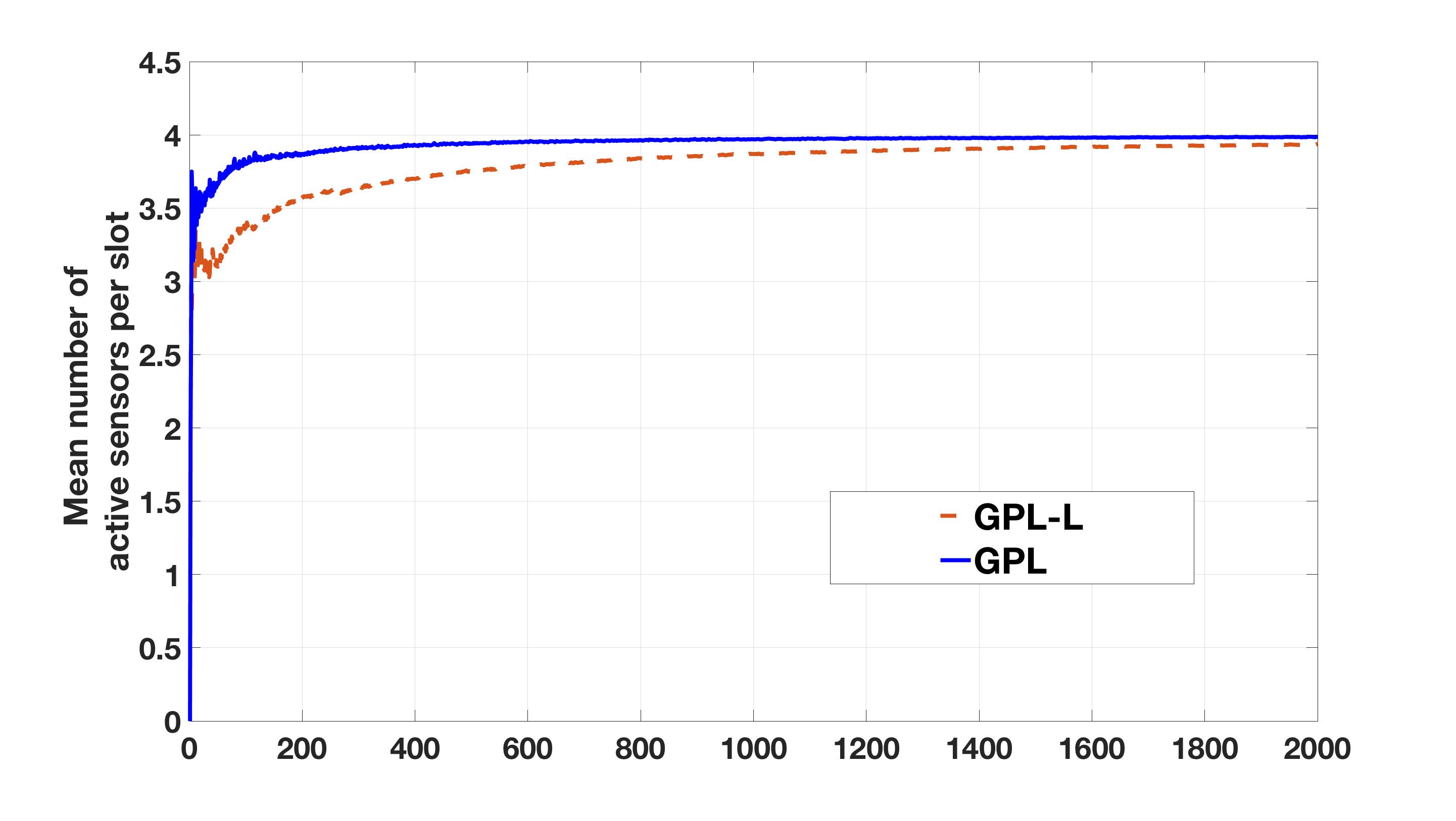}
\includegraphics[height=5cm, width=\linewidth]{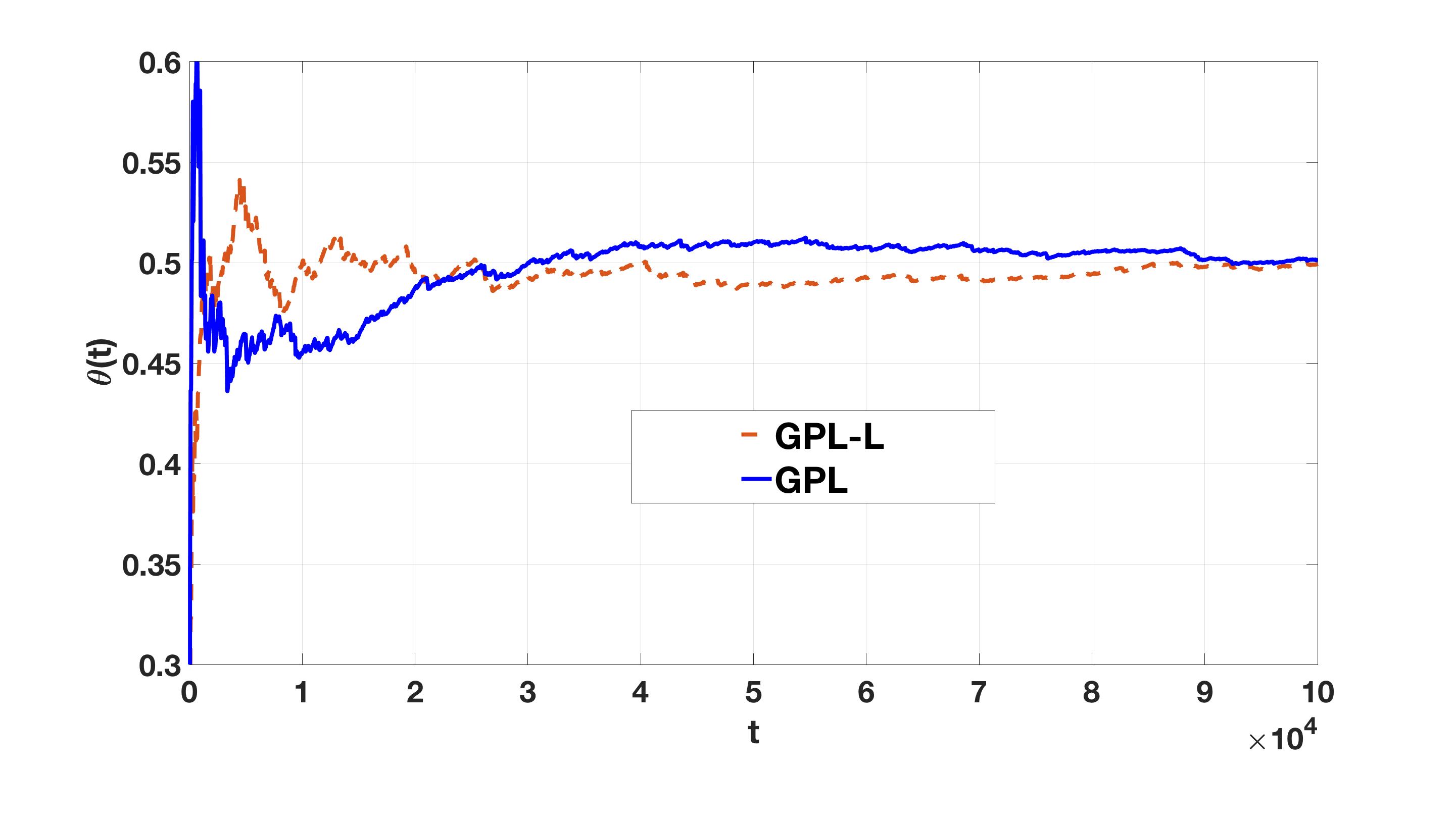}}
\end{minipage} \hfill
\caption{Performance of GPL for centralized tracking of the i.i.d process.}
\label{fig:performance-of-centralized-iid-tracking}
\end{figure*}

\subsection{Performance of BG applied to problem~\eqref{eqn:constrained-optimization-problem-static-data-parametric-distribution}}\label{subsection:numerical-gibbs-sampling-applied-to-hard-constrained-problem}
{\em 
Here we seek to solve problem~\eqref{eqn:constrained-optimization-problem-static-data-parametric-distribution} with $\bar{N}=4$ under the same setting as in Section~\ref{subsection:numerical-performance-of-basic-gibbs-sampling} except that a new sample of  the covariance matrix $M$ is chosen. Here we compare the MSE for the following three cases:
\begin{itemize}
\item  OPT: Here we choose an optimal subset  for \eqref{eqn:constrained-optimization-problem-static-data-parametric-distribution}.
\item  BG  under steady state: Here we assume that the configuration $B$ is chosen 
according to the steady-state distribution $\pi_{\beta}(\cdot)$, but restricted only to the set $\{B \in \mathcal{B}: ||B||_1=\bar{N}\}$. This is done by putting $h(B)=\mathbb{E}||X-\hat{X}||^2$ if $||B||_1 = \bar{N}$ and $h(B)=\infty$ otherwise.
This is done for several values of $\beta$.
\item GREEDY2: Start with an empty set $S$, and find the MSE if this subset of sensors are activated. Then find the sensor~$j_1$ which, when added to $S$, will result in the minimum MSE. If the MSE for $S \cup \{j_1\}$ is less than that of $S$, then do $S=S \cup \{j_1\}$.  Now find the sensor~$j_2$ which, when added to $S$, will result in the minimum MSE. If the MSE for $S \cup \{j_2\}$ is less  than that of $S$, then do $S=S \cup \{j_2\}$.  Repeat this   until we have $|S|=\bar{N}$, and activate the set of $\bar{N}$ sensors given by the final set $S$. 
 A similar greedy algorithm is used in \cite{wang-etal16efficient-observation-selection}.
\end{itemize}
The performances for these three cases are shown in Figure~\ref{fig:gibbs_optimal_greedy_comparison_fixed_number_of_active_sensors}.  BG outperforms GREEDY2 for $\beta \geq 3$, and becomes very close to OPT performance for $\beta \geq 5$. 
}

\subsection{Convergence speed of GL~algorithm}
\label{subsection:numerical-convergence-speed-gibbs-sampling-stochastic-approximation}
{\em We consider a setting similar to that of Section~\ref{subsection:numerical-performance-of-basic-gibbs-sampling}, except that we  fix $\beta=5$, and choose an $M$ which is different from that in Section~\ref{subsection:numerical-performance-of-basic-gibbs-sampling}. Under this setting, for $\lambda^*=2$, BG~algorithm yields the MMSE $2.4303$, and the expected number of sensors activated by BG~algorithm becomes $6.5247$. Now, let us consider problem~\eqref{eqn:centralized-constrained-problem} with the constraint value $\bar{N}=6.5247$. Clearly, if GL~algorithm is employed to find out the solution of problem~\eqref{eqn:centralized-constrained-problem} with $\bar{N}=6.5247$, then $\lambda(t)$ should converge to $\lambda^*=2$. }

{\em The evolution of $\lambda(t)$ (averaged over $50$ independent sample paths) under GL   is shown in  Figure~\ref{fig:gibbs-stochastic-approximation}. We can see that, starting from $\lambda(0)=4$ and and using the stepsize sequence $a(t)=\frac{1}{t}$, the iterate  $\lambda(t)$ becomes very close to $\lambda^*=2$ within $100$~iterations.  
Thus, our numerical illustration shows that GL~algorithm has reasonably fast convergence rate  for practical active sensing. We will later demonstrate convergence of the mean number of active sensors per slot to $\bar{N}$ for GPL, and hence do not show it here.}

\subsection{Performance of GPL}
{\em 
Now we   demonstrate the performance of GPL to solve \eqref{eqn:centralized-constrained-problem}.  We consider the following parameter values: $N=10$,  $\bar{N}=4$, $a(t)=\frac{0.1}{t^{0.6}}$, $b(t)=\frac{0.1}{t^{0.8}}$, $c(t)=\frac{0.1}{t}$, $d(t)=\frac{0.1}{t^{0.1}}$, $T=50$, $\lambda(0)=0.05$, $\beta=1000$. Gibbs sampling is run $10$ times per slot.

For illustration purpose, we assume that $X(t)  \sim \mathcal{N}(\theta_0, (1-\theta_0)^2)$ {\em scalar}, and $z_k(t)=X(t)+w_k(t)$, where $\theta_0=0.5$  and $w_k(t)$ is zero mean  i.i.d. Gaussian noise independent across $k$. Standard deviation of $w_k(t)$ is chosen uniformly and independently from the interval $[0,0.5]$, for each $k \in \{1,2,\cdots,N\}$.  Initial estimate $\theta(0)=0.2$, $\Theta=[0,0.8]$. 

We consider three possible algorithms and cases: (i) GPL in its basic form  (all sensors are read when $\mathcal{J}(t)=1$, and $\theta(t)$ and $f^{(t)}(B)$ are updated for all $B \in \mathcal{B}$ when $\mathcal{J}(t)=1$), (ii) a low-complexity  variation of GPL called GPL-L where all sensors are not read when $\mathcal{J}(t)=1$, and  $f^{(t)}(B(t))$ and $\theta(t)$ updates are done  every $T$ slots, and (iii) the  OPT case where   $\bar{N}$ sensors with  smallest observation noise variances are used for MMSE estimation in each slot,  with a perfect knowledge of $\theta_0=0.5$.  

The time-average MSE per slot, mean number of active sensors per slot, $\lambda(t)$ and $\theta(t)$ are plotted against   $t$ in Figure~\ref{fig:performance-of-centralized-iid-tracking}.  MSE  of all these three algorithms are much smaller than  $Var(X(t))=(1-\theta_0)^2$ (this is MMSE  without any observation). We notice that GPL and GPL-L perform close to OPT in terms of time-average MSE; this shows the power of Gibbs sampling and learning $\theta(t)$ over time. We also observe that, GPL converges faster than GPL-L, at the expense of additional computation and communication; but both algorithms asymptotically offer the same MSE per unit time.  We have plotted only one sample path since  the algorithms  converge almost surely to the global optimum in this case, as observed in the simulation. We observe that 
$\frac{1}{t}\sum_{\tau=1}^t ||B(\tau)||_1 \rightarrow \overline{N}$ and $\theta(t) \rightarrow \theta_0$ almost surely 
for both algorithms (verified by  simulating  multiple sample paths). It is interesting to note that $\theta^*=\theta_1=\theta_0$ in this numerical example (recall Theorem~\ref{theorem:convergence-of-GLEM} and the observations after that), i.e., both algorithms  converge to the true parameter value $\theta_0$. Convergence rate   will vary with stepsize   and other parameters, and hence is not discussed here. 

Note that, we have already shown performance improvement by the use of BG against GREEDY1 and GREEDY2 algorithms for known distribution; see Section~\ref{subsection:numerical-performance-of-basic-gibbs-sampling}. Hence, we do not consider asymptotic performance improvement of GPL against those two algorithms. \footnote{However, it is important to note that the OPT performance for the specific numerical example in Figure~\ref{fig:performance-of-centralized-iid-tracking} can be achieved by GREEDY2 because of the simple model and known $\theta_0$, but this is not true in general as observed in the numerical example of Section~\ref{subsection:numerical-performance-of-basic-gibbs-sampling}. In order to use GREEDY1 and GREEDY2 when $\theta_0$ is unknown, one has to run these two algorithms each time $\theta(t)$ is updated, and for this the MSE for all $B \in \mathcal{B}$ need to be recomputed for each new value of $\theta(t)$. On the contrary, performing one or a few steps of Gibbs sampling will be much easier in a particular slot.}

}

\section{Conclusions}\label{section:conclusion}
{\em 
We have proposed low-complexity centralized   learning algorithms for dynamic sensor subset selection for tracking i.i.d. time-varying   processes. We first provided  algorithms based on Gibbs sampling and stochastic approximation for i.i.d. time-varying data with known  distribution, and later provided learning algorithms for unknown, parametric distribution, and proved almost sure convergence.    Numerical results demonstrate the efficacy of the algorithms against simple algorithms without learning. In future, we seek to develop distributed tracking algorithms for i.i.d. process and Markov chains with known and unknown dynamics.
}

{\small
\bibliographystyle{unsrt}
\bibliography{arpan-techreport}
}

%
%
%

%

\appendices

{\em

\section{Proof of Theorem~\ref{theorem:relation-between-constrained-and-unconstrained-problems}}
\label{appendix:proof-of-relation-between-constrained-and-unconstrained-problems}
We will prove only the first part of the theorem where there exists one $B^*$. The second part of the theorem can be proved similarly. Let us denote the optimizer for \eqref{eqn:centralized-constrained-problem} by $B$, which is possibly different from $B^*$. Then, by the definition of $B^*$, we have $f(B^*)+ \lambda^* ||B^*||_1 \leq f(B)+ \lambda^* ||B||_1$. But $||B||_1 \leq K$ (since $B$ is a feasible solution to the constrained problem) and $||B^*||_1=K$ (by assumption). Hence, $f(B^*) \leq f(B)$. This completes the proof.

\section{Weak and Strong Ergodicity}
\label{appendix:weak-and-strong-ergodicity}
Consider a discrete-time  Markov chain (possibly not time-homogeneous) $\{B(t)\}_{t \geq 0}$ with transition probability matrix (t.p.m.)  $P(m;n)$ between 
$t=m$ and $t=n$. We denote by $\mathcal{D}$   the collection of all possible probasbility distributions 
 on the state space. Let $d_V(\cdot,\cdot)$ denote  the total variation distance between two distributions in $\mathcal{D}$. 
Then $\{B(t)\}_{t \geq 0}$ is called weakly ergodic if, for all $m \geq 0$,  we have 
$\lim_{n \uparrow \infty} \sup_{\mu,\nu \in \mathcal{D}}  d_V (\mu P(m;n) , \nu P(m;n) ) =0 $.  The Markov chain $\{B(t)\}_{t \geq 0}$ is called strongly ergodic if there exists $\pi \in \mathcal{D}$ such that, 
$\lim_{n \uparrow \infty} \sup_{\mu \in \mathcal{D}}  d_V (\mu^{T} P(m;n) , \pi ) =0 $ for all $m \geq 0$.

\section{Proof of Theorem~\ref{theorem:result-on-weak-and-strong-ergodicity}}
\label{appendix:proof-of-weak-and-strong-ergodicity}
 
We will first show that the Markov chain $\{B(t)\}_{t \geq 0}$ in weakly ergodic.

Let us define $\Delta:=\max_{B \in \mathcal{B}, A \in \mathcal{B}}|h(B)-h(A)|$.

Consider the transition probability matrix (t.p.m.) $P_l$ for the inhomogeneous Markov 
chain $\{Y(l)\}_{l \geq 0}$ (where $Y(l):=B(lN)$). The Dobrushin's ergodic coefficient $\delta(P_l)$ is given by 
(see \cite[Chapter~$6$, Section~$7$]{breamud99gibbs-sampling} for definition) 
$\delta(P_l)=1- \inf_{B^{'},B^{''} \in \mathcal{B}} \sum_{B \in \mathcal{B}} \min \{P_l(B^{'},B),P_l(B^{''},B) \}$. 
A sufficient condition for the Markov chain $\{B(t)\}_{t \geq 0}$  
to be weakly ergodic is $\sum_{l=1}^{\infty}(1-\delta(P_l))=\infty$ (by 
\cite[Chapter~$6$, Theorem~$8.2$]{breamud99gibbs-sampling}).

Now, with positive probability, activation states for all nodes are updated over a period of $N$ slots. Hence, $P_l(B^{'},B)>0$ for all $B^{'},B \in \mathcal{B}$. Also, once a node $j_t$ for $t=lN+k$ 
is chosen  in ABG~algorithm, the sampling probability for any activation state in a slot 
is greater than  $\frac{e^{-\beta(lN+k) \Delta}}{2}$. 
Hence, for independent sampling over $N$ slots, we  have, for all pairs $B^{'},B$: 
$$P_l(B^{'},B) >  \prod_{k=0}^{N-1}\bigg( \frac{e^{-\beta(lN+k) \Delta}}{2N} \bigg) >0$$ 
Hence, 
\begin{eqnarray}
&&\sum_{l=0}^{\infty}(1-\delta(P_l)) \nonumber\\
&=& \sum_{l=0}^{\infty} \inf_{B^{'},B^{''} \in \mathcal{B}} \sum_{B \in \mathcal{B}} \min \{P_l(B^{'},B),P_l(B^{''},B) \} \nonumber\\
& \geq & \sum_{l=0}^{\infty} 2^N \prod_{k=0}^{N-1} \bigg( \frac{e^{-\beta(0) \log(1+lN+k) \times \Delta}}{2N} \bigg) \nonumber\\
& \geq &   \sum_{l=0}^{\infty}  \prod_{k=0}^{N-1} \bigg( \frac{e^{-\beta(0) \log(1+lN+N) \times \Delta}}{N} \bigg)  \nonumber\\
& = &  \frac{1}{N^N} \sum_{l=1}^{\infty}  \frac{1}{  (1+lN)^{\beta(0) N \Delta}}  \nonumber\\
& \geq &  \frac{1}{ N^{N+1}} \sum_{i=N+1}^{\infty}  \frac{1}{  (1+i)^{\beta(0) N \Delta}}  \nonumber\\
& = & \infty
 \end{eqnarray}
Here the first inequality uses the fact that the cardinality of $\mathcal{B}$ is $2^N$. The second inequality follows from  replacing $k$ by $N$ in the numerator. The third inequality follows from lower-bounding  
$\frac{1}{(1+lN)^{\beta(0) N \Delta}}$ by $\frac{1}{N}\sum_{i=lN}^{lN+N-1}  \frac{1}{  (1+i)^{\beta(0) N \Delta}} $. 
The last equality follows from the fact that $\sum_{i=1}^{\infty} \frac{1}{i^a}$ diverges for $0 <a<1$.

 Hence, the Markov chain  $\{B(t)\}_{t \geq 0}$ is  weakly ergodic.
 
 In order to prove strong ergodicity of 
 $\{B(t)\}_{t \geq 0}$, we invoke \cite[Chapter~$6$, Theorem~$8.3$]{breamud99gibbs-sampling}. 
 We denote the  t.p.m. of $\{B(t)\}_{t \geq 0}$ at a specific time $t=T_0$ 
 by $Q^{(T_0)}$, which is a given specific matrix. If  $\{B(t)\}_{t \geq 0}$   evolves up to infinite time 
 with {\em fixed} t.p.m. $Q^{(T_0)}$, then it will reach the stationary distribution $\pi_{\beta_{T_0}}(B)= \frac{e^{-\beta_{T_0} h(B)}}{Z_{\beta_{T_0}}}$. 
 Hence, we can claim that Condition~$8.9$ of \cite[Chapter~$6$, Theorem~$8.3$]{breamud99gibbs-sampling} is  satisfied. 
 
 Next, we check Condition~$8.10$ of \cite[Chapter~$6$, Theorem~$8.3$]{breamud99gibbs-sampling}. 
 For any $B \in \arg \min_{B^{'} \in \mathcal{B}} h(B^{'})$,  we can argue that $\pi_{\beta_{T_0}}(B)$ increases with $T_0$ for 
 sufficiently large $T_0$; this can be verified by considering the derivative of $\pi_{\beta}(B)$ w.r.t. $\beta$. For $B \notin \arg \min_{B^{'} \in \mathcal{B}} h(B^{'})$,  the probability 
 $\pi_{\beta_{T_0}}(B)$ decreases with $T_0$ for large $T_0$. Now, using the fact that any monotone, bounded sequence converges, we can write $\sum_{T_0=0}^{\infty} \sum_{B \in \mathcal{B}} |\pi_{\beta_{T_0+1}}(B)-\pi_{\beta_{T_0}}(B)| < \infty$. 
 
 Hence, by \cite[Chapter~$6$, Theorem~$8.3$]{breamud99gibbs-sampling}, the Markov chain $\{B(t)\}_{t \geq 0}$ is strongly ergodic.  It  is straightforward to verify the claim regarding the  limiting distribution.

\section{Proof of Lemma~\ref{lemma:active-sensors-decreasing-in-lambda}}
\label{appendix:proof-of-active-sensors-decreasing-in-lambda}
Let $\lambda_1 > \lambda_2 > 0$, and the corresponding optimal error and mean number of active sensors under these multiplier values be $(f_1,n_1)$ and $(f_2,n_2)$, respectively. Then, by definition, $f_1 + \lambda_1 n_1 \leq f_2 + \lambda_1 n_2$ and $f_2 + \lambda_2 n_2 \leq f_1 + \lambda_2 n_1$. Adding these two inequalities, we obtain $\lambda_1 n_1+\lambda_2 n_2 \leq \lambda_1 n_2+\lambda_2 n_1$, i.e., $(\lambda_1-\lambda_2)n_1 \leq (\lambda_1-\lambda_2)n_2$. Since $\lambda_1>\lambda_2$, we obtain $n_1 \leq n_2$. This completes the first part of the proof. The second part of the proof follows using similar arguments.

\section{Proof of Lemma~\ref{lemma:active-sensors-decreasing-in-lambda-under-basic-gibbs-sampling}}
\label{appendix:proof-of-active-sensors-decreasing-in-lambda-under-basic-gibbs-sampling}
Let us denote $\mathbb{E}_{\mu_2}||B(t)||_1=:g(\lambda)=\frac{\sum_{B \in \mathcal{B}} ||B||_1 e^{-\beta h(B)}}{Z_{\beta}}$. 
It is straightforward to see that $\mathbb{E}_{\mu_2}||B(t)||_1$ is continuously differentiable in $\lambda$.  
Let us denote $Z_{\beta}$ by $Z$ for simplicity. 
The derivative of $g(\lambda)$ w.r.t. $\lambda$ is given by:
\tiny
\begin{eqnarray*}
&& g'(\lambda)\\
&=& \frac{  -Z \beta \sum_{B \in \mathcal{B}} ||B||_1^2 e^{-\beta(f(B)+\lambda ||B||_1)} - \sum_{B \in \mathcal{B}} ||B||_1 e^{-\beta(f(B)+\lambda ||B||_1)}  \frac{dZ}{d \lambda}        }{Z^2}
\end{eqnarray*}
\normalsize

Now, it is straightforward to verify that $\frac{dZ}{d \lambda}=-\beta Z g(\lambda)$. Hence, 
\tiny
\begin{eqnarray*}
&& g'(\lambda)\\
&=& \frac{  -Z \beta \sum_{B \in \mathcal{B}} ||B||_1^2 e^{-\beta(f(B)+\lambda ||B||_1)} + \sum_{B \in \mathcal{B}} ||B||_1 e^{-\beta(f(B)+\lambda ||B||_1)} \beta Z g(\lambda)       }{Z^2}
\end{eqnarray*}
\normalsize

Now, $g'(\lambda) \leq 0$ is equivalent to 
$$g(\lambda) \leq \frac{  \sum_{B \in \mathcal{B}} ||B||_1^2 e^{-\beta(f(B)+\lambda ||B||_1)}    }{    \sum_{B \in \mathcal{B}} ||B||_1 e^{-\beta(f(B)+\lambda ||B||_1)}    }$$
Noting that $\mathbb{E}||B||_1=:g(\lambda)$ and dividing the numerator and denominator of R.H.S. by $Z$, 
the condition is reduced to $\mathbb{E}||B||_1 \leq \frac{\mathbb{E}||B||_1^2}{\mathbb{E}||B||_1}$, which is true since 
$\mathbb{E}||B||_1^2 \geq (\mathbb{E}||B||_1)^2$. Hence, $\mathbb{E}||B||_1$ is decreasing in $\lambda$ for any $\beta>0$. Also, it is easy to verify that $|g'(\lambda)| \leq (\beta+1) N^2$. Hence, $g(\lambda)$ is Lipschitz continuous in $\lambda$.

\section{Proof of Theorem~\ref{theorem:optimality-of-the-learning-algorithm-for-constrained-problem}}
\label{appendix:proof-of-optimality-of-the-learning-algorithm-for-constrained-problem}
Let the distribution of $B(t)$ under GL  be 
$\pi^{(t)}(\cdot)$. Since $\lim_{t \rightarrow \infty} a(t)=0$, it follows that $\lim_{t \rightarrow \infty} d_V(\pi^{(t)}, \pi_{\beta| \lambda(t)})=0$ (where $d_V(\cdot,\cdot)$ is the total variation distance), and $\lim_{t \rightarrow \infty} ( \mathbb{E}_{\pi^{(t)}}||B(t)||_1-\mathbb{E}_{\pi_{\beta} | \lambda(t) }||B(t)||_1 ) :=\lim_{t \rightarrow \infty} e(t)=0$. Now, we can rewrite the $\lambda(t)$ update equation as follows:
\small
\begin{eqnarray}
 \lambda(t+1)=[  \lambda(t)+a(t) (\mathbb{E}_{\pi_{\beta}|\lambda(t)}||B(t)||_1-\bar{N}+M_t+e_t)  ]_b^c
\end{eqnarray} 
\normalsize

Here $M_t:=||B(t)||_1 - \mathbb{E}_{\pi^{(t)}} ||B(t)||_1$ is a Martingale difference noise sequence, and 
$\lim_{t \rightarrow \infty} e_t=0$. It is easy to see that the derivative of $\mathbb{E}_{\pi_{\beta} | \lambda }||B(t)||_1$ w.r.t. $\lambda$ is bouned for $\lambda \in (b,c)$; hence, $\mathbb{E}_{\pi_{\beta} | \lambda }||B(t)||_1$ is a Lipschitz continuous function of $\lambda$. It is also easy to see that the sequence $\{M_t\}_{t \geq 0}$ is bounded. Hence, by the theory presented in 
\cite[Chapter~$2$]{borkar08stochastic-approximation-book} and 
\cite[Chapter~$5$, Section~$5.4$]{borkar08stochastic-approximation-book}, $\lambda(t)$ converges to the unique zero of $\mathbb{E}_{\pi_{\beta} | \lambda }||B(t)||_1-\bar{N}$ almost surely. Hence,  $\lambda(t) \rightarrow \lambda^*$ almost surely. Since $\lim_{t \rightarrow \infty} d_V(\pi^{(t)}, \pi_{\beta| \lambda(t)})=0$ and $\pi_{\beta | \lambda}$ is continuous in $\lambda$, the limiting distribution of $B(t)$ becomes $\pi_{\beta | \lambda^*}$.

\section{Proof of Theorem~\ref{theorem:convergence-of-GLEM}}
\label{appendix:proof-of-GLEM}
The proof involves several steps, and  these steps are provided one by one. 
\subsubsection{Convergence in the fastest timescale}
Let us denote the probability distribution of $B(t)$ under GPL  by $\pi^{(t)}$ (a column vector indexed by the cofigurations from $\mathcal{B}$), and the corresponding transition probability matrix (TPM) by $A(t)$; i.e., $(\pi^{(t+1)})^T=(\pi^{(t)})^T A(t)=(1-1)\times(\pi^{(t)})^T + 1\times  (\pi^{(t)})^T A(t) $. This form is similar to a standard stochastic approximation scheme as in \cite[Chapter~$2$]{borkar08stochastic-approximation-book} except that the step size sequence for $\pi^{(t)}$ iteration is a constant sequence. Also, if $f^{(t)}(B)$, $\lambda(t)$ and $\theta(t)$ are constant with time $t$, then $A(t)=A$ will also be constant with time $t$, and the stationary distribution for the TPM $A$ will exist and will  be Lipschitz continuous in all (constant) slower timescale iterates. Hence, by using similar argument as in \cite[Chapter~$6$, Lemma~$1$]{borkar08stochastic-approximation-book}, one can show the following for all $B \in \mathcal{B}$:
\begin{eqnarray}\label{eqn:convergence-fastest-timescale}
\lim_{t \rightarrow \infty} |\pi^{(t)}(B)-\pi_{\beta,f^{(t)},\lambda(t), \theta(t)}(B)|=0 \,\,\, a.s.
\end{eqnarray}
where $\pi_{\beta,f^{(t)},\lambda(t), \theta(t)}(\cdot)$  can be obtained by replacing $h(B)$ in \eqref{eqn:definition-of-Gibbs-distribution} by $f^{(t)}(B)+\lambda(t)(\theta(t))||B||_1$

\subsubsection{Convergence of iteration \eqref{eqn:fB-update-iid-data}}
Note that,  \eqref{eqn:fB-update-iid-data} depends on $\theta(t)$ and not  on $B(t)$ and $\lambda(t)$; the iteration \eqref{eqn:fB-update-iid-data} depends on $\theta(t)$ through the estimation function $\mu_2$. Now, $f^{(t)}(B)$ is updated at a faster timescale compared to  $\theta(t)$. Let us consider the iterations \eqref{eqn:fB-update-iid-data} and \eqref{eqn:theta-update-iid-data}; they constitute a two-timescale stochastic approximation. 

Note that, for a given $\theta$, the iteration \eqref{eqn:fB-update-iid-data}  remains bounded inside a compact set independent of $\theta$; hence, using \cite[Chapter~$2$, Theorem~$2$]{borkar08stochastic-approximation-book} with additional modification as suggested in \cite[Chapter~$5$, Section~$5.4$]{borkar08stochastic-approximation-book} for projected stochastic  approximation, we can claim that $\lim_{t \rightarrow \infty} f^{(t)}(B) \rightarrow f_{\theta}(B)$ almost surely for all $B \in \mathcal{B}$, if $\theta(t)$ is kept fixed at a value $\theta$. Also, since $\mu_2 $ is Lipschitz continuous in $\theta$, we can claim that $f_{\theta}(B)$ is Lipschitz continuous in $\theta$ for all $B \in \mathcal{B}$. We also have $\lim_{t \rightarrow \infty}\frac{c(\nu(t))}{a(\nu(t))}=0$.

 Hence, by using an analysis similar to that in 
\cite[Appendix~E, Section~C.2]{chattopadhyay-etal15measurement-based-impromptu-deployment-arxiv-v1} (which uses \cite[Chapter~$6$, Lemma~$1$]{borkar08stochastic-approximation-book}), one can claim that:
\begin{eqnarray}\label{eqn:convergence_fB-iid-data}
\lim_{t \rightarrow \infty} |f^{(t)}(B)-f_{\theta(t)}(B)|=0 \,\,\, a.s. \,\,\, \forall B \in \mathcal{B}
\end{eqnarray}
{\em This proves the desired convergence of the iteration \eqref{eqn:fB-update-iid-data}.}

\subsubsection{Convergence of $\lambda(t)$ iteration}
 The $\lambda(t)$ iteration will view $\theta(t)$ as quasi-static and $B(t)$, $f^{(t)}(\cdot)$ iterations as equilibriated.

Let us  assume that $\theta(t)$ is kept fixed at $\theta$. Then, by \eqref{eqn:convergence-fastest-timescale} and \eqref{eqn:convergence_fB-iid-data}, we can work with $\pi_{\beta,f_{\theta},\lambda^{(t)}, \theta}$ in this timescale. 
Under this situation, \eqref{eqn:lambda-update-iid-data} asymptotically tracks the iteration  
$\lambda(t+1)=[\lambda(t)+b(t) ( \sum_{B \in \mathcal{B}} \pi_{\beta,f_{\theta},\lambda(t), \theta}(B) ||B||_1-\bar{N}+M_t )]_0^A$ where $\{M_t\}_{t \geq 0}$ is a Martingale differenece sequence. Now, $\pi_{\beta,f_{\theta},\lambda(t), \theta}(B)$ is Lipschitz continuous in $\theta$ and $\lambda(t)$ (using  Assumption~\ref{assumption:Lipschitz-continuity-wrt-theta}, Assumption~\ref{assumption:existence-of-lambda*} and a little algebra on the expression~\eqref{eqn:definition-of-Gibbs-distribution}). If $A_0$ is large enough, then, by the theory of \cite[Chapter~$2$, Theorem~$2$]{borkar08stochastic-approximation-book} and \cite[Chapter~$5$, Section~$5.4$]{borkar08stochastic-approximation-book}, one can claim that $\lambda(t) \rightarrow \lambda^*(\theta)$ almost surely, and $\lambda^*(\theta)$ is Lipschitz continuous in $\theta$ (by Assumption~\ref{assumption:existence-of-lambda*}).

 Hence, by using similar analysis as in 
\cite[Appendix~E, Section~C.2]{chattopadhyay-etal15measurement-based-impromptu-deployment-arxiv-v1} (which uses  \cite[Chapter~$6$, Lemma~$1$]{borkar08stochastic-approximation-book}), we can say that,  under iteration~\eqref{eqn:lambda-update-iid-data}:
\begin{eqnarray}\label{eqn:convergence-in-lambda-iteration-iid-data}
\lim_{t \rightarrow \infty} |\lambda(t)-\lambda^*(\theta(t))|=0 \,\,\, a.s.
\end{eqnarray}

\subsubsection{Convergence of the $\theta(t)$ iteration}
Note that, \eqref{eqn:theta-update-iid-data} is the slowest timescale iteration and hence it will view all other there iterations (at three different timescales) as equilibriated. However, this iteration is not affected by other iterations. Hence, this iteration is an example of simultaneous perturbation stochastic approximation as in \cite{spall92original-SPSA}, but with a projection operation applied on the iterates.  Hence, by combining \cite[Proposition~$1$]{spall92original-SPSA} and the discussion in \cite[Chapter~$5$, Section~$5.4$]{borkar08stochastic-approximation-book}, we can say that $\lim_{t \rightarrow \infty} \theta(t)=\theta^*$ almost surely in case Assumption~\ref{assumption:finite-number-of-local-maximum} holds.

If there does not exist a globally asymptotically stable equilibrium $\theta^*$ (as assumed in Assumption~\ref{assumption:finite-number-of-local-maximum}), then, by using the same techniques as in \cite[Appendix~E, Section~C]{chattopadhyay-etal15measurement-based-impromptu-deployment-arxiv-v1}, one can claim that the $\theta(t)$ iteration almost surely converges to the stationary points of the ODE $\dot{\theta}(\tau)=\bar
{\Gamma}_{\theta(\tau)}(\nabla g(\theta(\tau)))$.

\subsubsection{Completing the proof}
We have seen that  $\lim_{t \rightarrow \infty} \theta(t)=\theta^*$ almost surely. Hence, by \eqref{eqn:convergence-in-lambda-iteration-iid-data}, $\lim_{t \rightarrow \infty} \lambda(t)=\lambda^*(\theta^*)$ almost surely. By \eqref{eqn:convergence_fB-iid-data}, $\lim_{t \rightarrow \infty} f^{(t)}(B)=f_{\theta^*}(B)$ almost surely for all $B \in \mathcal{B}$. Then, by \eqref{eqn:convergence-fastest-timescale}, $\lim_{t \rightarrow \infty} \pi^{(t)}(B)=\pi_{\beta,f_{\theta^*},\lambda^*(\theta^*), \theta^*}(B)$ almost surely. Hence, Theorem~\ref{theorem:convergence-of-GLEM} is proved.

}

\end{document}